\numberwithin{equation}{section}
\newtheorem{thm}{Theorem}[section]
\newtheorem{cor}[thm]{Corollary}
\newtheorem{lem}[thm]{Lemma}
\newtheorem{pro}[thm]{Proposition}
\newtheorem*{thm*}{Theorem}
\newtheorem*{opq*}{Problem}
\theoremstyle{remark}
\newtheorem{rem}[thm]{Remark}
\theoremstyle{definition}
\newtheorem{exa}[thm]{Example}
\DeclareMathOperator{\E}{e}
\newcommand*{\bscr}{\mathscr{B}}
\newcommand*{\cbb}{\mathbb C}
\newcommand*{\cscr}{\mathscr{C}}
\newcommand*{\dbb}{{\mathbb D}}
\newcommand*{\dz}[1]{{\mathscr D}(#1)}
\newcommand*{\gev}[2]{{\mathscr E}_{#1;#2}}
\newcommand*{\fscr}{\mathscr F}
\newcommand*{\fcal}{\mathcal F}
\newcommand*{\Ge}{\geqslant}
\newcommand*{\gammab}{\boldsymbol \gamma}
\newcommand*{\hh}{\mathcal H}
\newcommand*{\I}{{\mathrm i\hspace{.1ex}}}
\newcommand*{\is}[2]{\langle#1,#2\rangle}
\newcommand*{\jd}[1]{\mathscr N(#1)}
\newcommand*{\kbb}{\mathbb K}
\newcommand*{\Le}{\leqslant}
\newcommand*{\lino}[1]{\boldsymbol L(#1)}
\newcommand*{\mscr}{\mathscr{M}}
\newcommand*{\nbb}{\mathbb N}
\newcommand*{\nil}[1]{\mathsf{n}(#1)}
\newcommand*{\ogr}[1]{\boldsymbol B(#1)}
\newcommand*{\rbb}{\mathbb R}
\newcommand*{\tbb}{{\mathbb T}}
\newcommand*{\tscr}{{\mathscr T}}
\newcommand*{\ubb}{\mathbb{U}}
\newcommand*{\xscr}{{\mathscr X}}
\newcommand*{\zbb}{\mathbb Z}
\begin{document}
   \title[$m$-isometric operators and their local properties]{$m$-isometric operators and their local properties}
   \author[Z.\ J.\ Jab{\l}o\'nski]{Zenon Jan
Jab{\l}o\'nski}
   \address{Instytut Matematyki,
Uniwersytet Jagiello\'nski, ul.\ \L ojasiewicza 6,
PL-30348 Kra\-k\'ow, Poland}
\email{Zenon.Jablonski@im.uj.edu.pl}
   \author[I.\ B.\ Jung]{Il Bong Jung}
   \address{Department of Mathematics, Kyungpook National University,
Da\-egu 41566, Korea} \email{ibjung@knu.ac.kr}
   \author[J.\ Stochel]{Jan Stochel}
\address{Instytut Matematyki, Uniwersytet
Jagiello\'nski, ul.\ \L ojasiewicza 6, PL-30348
Kra\-k\'ow, Poland} \email{Jan.Stochel@im.uj.edu.pl}
   \thanks{The research of the second author was supported by the National Research
Foundation of Korea (NRF) grant funded by the Korea
government (MSIT) (2018R1A2B6003660)}
\subjclass[2010]{Primary 47B20; Secondary 47A75}
\keywords{$m$-isometric operator, generalized
eigenspace, Jordan block, orthogonality, algebraic
operator}

   \maketitle
   \begin{abstract}
In this paper we give necessary and sufficient conditions
for a bounded linear Hilbert space operator to be an
$m$-isometry for an unspecified $m$ written in terms of
conditions that are applied to ``one vector at a time''. We
provide criteria for orthogonality of generalized
eigenvectors of an (a priori unbounded) linear operator $T$
on an inner product space that correspond to distinct
eigenvalues of modulus $1$. We also discuss a similar
question of when Jordan blocks of $T$ corresponding to
distinct eigenvalues of modulus $1$ are ``orthogonal''.
   \end{abstract}
   \section{\label{Sec1}Introduction and notation}
   Let $\hh$ be a Hilbert space (all vector spaces are
assumed to be complex throughout this paper). Denote by
$\ogr{\hh}$ the $C^*$-algebra of all bounded linear
operators on $\hh$ and by $I_{\hh}$ (abbreviated to $I$ if
no confusion arises) the identity operator on $\hh$.
Suppose that $T\in \ogr{\hh}$. Define
   \begin{align*}
\bscr_m(T) = \sum_{k=0}^m (-1)^k \binom{m}{k}
{T^*}^kT^k, \quad m=0,1,2,\ldots.
   \end{align*}
Given a positive integer $m$, we say that $T$ is an
{\em $m$-isometry} if $\bscr_m(T)=0$. Clearly,
   \begin{align} \label{miso-1}
   \begin{minipage}{72ex}
{\em $T$ is an $m$-isometry if and only if
$\sum\limits_{k=0}^m (-1)^k \binom{m}{k} \|T^k h\|^2 =
0$ for all $h \in \hh$.}
   \end{minipage}
   \end{align}
A $1$-isometry is an isometry and {\em vice versa}. The
notion of an $m$-isometric operator has been invented by
Agler (see \cite[p.\ 11]{Ag-0}). We refer the reader to the
trilogy \cite{Ag-St1,Ag-St2,Ag-St3} by Agler and Stankus
for the fundamentals of the theory of $m$-isometries. The
class of $m$-isometric operators emerged from the study of
the time shift operator of the modified Brownian motion
process \cite{Ag-St2} as well as from the investigation of
invariant subspaces of the Dirichlet shift
\cite{Rich88,Rich91}. The topics related to $m$-isometries
are currently being studied intensively (see e.g.,
\cite{Ber-Mar-No13,Ber-Mar-Mul-No14,Le15,Ab-Le16,McC-Ru2016,Ko-Lee2018,A-C-J-S19}).

Let us recall a few facts about $m$-isometries. It is
easily seen that $m$-isometries are injective. The
following is well known (see \cite[Lemma~1.21]{Ag-St1}).
   \begin{align} \label{as-spec}
   \begin{minipage}{70ex}
{\em If $\hh\neq \{0\}$ and $T\in\ogr{\hh}$ is an
$m$-isometry, then either $\sigma(T) \subseteq \tbb$ or
$\sigma(T)=\overline{\dbb}${\em ;} in both cases $r(T)=1$.}
   \end{minipage}
   \end{align}
Here $\sigma(T)$ is the spectrum of $T$, $r(T)$ is the
spectral radius of $T$, $\dbb$ is the open unit disc in the
complex plane centered at $0$ and $\tbb$ is its topological
boundary. Following \cite{Bo-Ja}, we say that an
$m$-isometry $T$ is {\em strict} if $m=1$ and $\hh\neq
\{0\}$, or $m\Ge 2$ and $T$ is not an $(m-1)$-isometry (in
both cases $\hh\neq \{0\}$). Examples of strict
$m$-isometries for any $m\Ge 2$ are provided in
\cite[Proposition~8]{At91} (see also
\cite[Example~2.3]{Sh-At}). As observed by Agler and
Stankus, if $T\in \ogr{\hh}$ is an $m$-isometry, then $T$
is a $k$-isometry for all $k\Ge m$ (see line 6 on page 389
in \cite{Ag-St1} and note that $\beta_m(T)=
\frac{(-1)^m}{m!} \bscr_m(T)$). This implies that if $T$ is
an $m$-isometry, then there exists a unique $k\in \nbb$
such that $T$ is a strict $k$-isometry and $k\Le m$.

The well-known characterization of $m$-isometries due to
Agler and Stankus states that an operator $T\in \ogr{\hh}$
is an $m$-isometry if and only if for every $h\in \hh$,
$\|T^nh\|^2$ is a polynomial in $n$ of degree at most $m-1$
(see \cite[p.\ 389]{Ag-St1}; see also Theorem~\ref{Newt4}).
The first question we deal with in this paper is whether
dropping the degree constraint yields $m$-isometricity of
$T$ for some unspecified $m$. This problem is formulated in
the spirit of Kaplansky's theorem \cite[Theorem~15]{Kap}
which asserts that an operator $T\in \ogr{\hh}$ is
algebraic if and only if it is locally algebraic. We answer
the above question in the affirmative (see
Theorem~\ref{weakmiso}). However, if we drop the assumption
that $\hh$ is complete the answer is no (see
Remark~\ref{wiel-nod}). Nevertheless, the assertion
\eqref{miso-1} enables one to define $m$-isometries in the
case of lack of completeness of $\hh$. Motivated by needs
of the theory of unbounded operators (see e.g.,
\cite{Ja-St01}), we consider $m$-isometries $T$ on an inner
product space $\mscr$ starting from Section~ \ref{Sec4}.
Notice that any linear operator $T$ on $\mscr$ can be
regarded as an (a priori unbounded) operator in $\hh$ with
invariant domain $\mscr$, where $\hh$ is the Hilbert space
completion of $\mscr$. We do not assume that $\mscr$ is
invariant for $T^*$. Let us point out that even under these
circumstances it may happen that $\dz{T^*}=\{0\}$ (see
\cite[Example~3, p.\ 69]{Weid80}). In this part of the
paper we are mostly interested in finding criteria for
orthogonality of generalized eigenvectors of $T$
corresponding to distinct eigenvalues of modulus $1$. A
similar question can be asked about ``orthogonality'' of
Jordan blocks of $T$ corresponding to distinct eigenvalues
of modulus $1$. We answer both these questions by using the
polynomial approach as developed for $m$-isometries. The
most delicate step in the proof depends heavily on the
celebrated Carlson's theorem which gives a sufficient
condition for an entire function of exponential type
vanishing on nonnegative integers to vanish globally. These
two problems are related to \cite{A-H-S} where the question
of orthogonality of spectral spaces corresponding to
distinct eigenvalues were studied in the context of bounded
algebraic Hilbert space operators that are roots of
polynomials in two variables. As a byproduct, we answer the
question of when an algebraic operator on $\mscr$ is an
$m$-isometry. Namely, we show that for a given $m$,
algebraic $m$-isometries are precisely finite orthogonal
sums of operators of the form $z I + N$, where $z$ is a
complex number of modulus $1$ and $N$ is a nilpotent
operator with prescribed index of nilpotency (see
Proposition~\ref{m-iso-alg}). What is more, if an algebraic
operator is a strict $m$-isometry, then $m$ must be a
positive odd number. It is worth mentioning that there are
$m$-isometries and nilpotent operators that are unbounded
and closed as operators in $\hh$ (see
\cite[Example~6.4]{Ja-St01} and \cite[Example~3.3]{Ota88},
resp.). We also show that if $T\in\ogr{\hh}$ is a compact
$m$-isometry, then $\hh$ is finite dimensional (see
Proposition~\ref{compop}).

The organization of this paper is as follows. In
Section~\ref{Sec2} we provide necessary facts concerning
$m$-isometries needed in this paper. In particular, we
discuss a finite difference operator which plays an
essential role in our investigations. In Section~\ref{Sec3}
we state and prove a few ``local'' characterizations of
$m$-isometries including that mentioned above (see
Theorem~\ref{weakmiso}). These characterizations are stated
in terms of conditions that are applied to ``one vector at
a time''. In particular, we show that verifying
$m$-isometricity for some unspecified $m$ can be reduced to
considering the restrictions of the operator in question to
its cyclic invariant subspaces. This result resembles the
analogical ones for subnormal operators (see
\cite[Corollary~1]{Trent1981}; see also \cite{Sto-Sz1984}).
Starting from Section~ \ref{Sec4}, we investigate
$m$-isometries on an inner product space $\mscr$. Theorem~
\ref{Bang1-m} is an adaptation of a result due to
Berm\'{u}dez, Martin\'{o}n, M\"{u}ller and Noda
\cite[Theorem~3]{Ber-Mar-Mul-No14} to the context of linear
operators on $\mscr$. The main result of
Section~\ref{Sec5}, Theorem~\ref{alg-m-iso-lem} provides a
criterion for orthogonality of generalized eigenvectors.
Finally, in Section~\ref{Sec6}, we characterize
``orthogonality'' of Jordan blocks corresponding to
distinct eigenvalues of modulus $1$ (see Theorem~
\ref{alg-m-iso-lem-b}).

We conclude this section by fixing notation. In what
follows, $\rbb$ and $\cbb$ stand for the fields of real and
complex numbers, respectively. Set $\tbb=\{z\in \cbb\colon
|z|=1\}$ and $\ubb=\{-1,1\} \times \{-\I,\I\}$, where $\I$
stands for the imaginary unit. The sets of integers,
nonnegative integers and positive integers are denoted by
$\zbb$, $\zbb_+$ and $\nbb$, respectively. The ring of all
polynomials in one indeterminate $x$ with coefficients in a
ring $R$ is denoted by $R[x]$. If $p=\sum_{j=0}^n a_j x^j
\in \cbb[x]$, then the polynomials $p^*, \, \mathrm{Re}\,
p, \, \mathrm{Im}\, p \in \cbb[x]$ are defined by
   \begin{align*}
p^*=\sum_{j=0}^n \bar a_j x^j, \quad \mathrm{Re}\,
p=\sum_{j=0}^n \mathrm{Re} (a_j) x^j \quad \text{and} \quad
\mathrm{Im}\, p=\sum_{j=0}^n \mathrm{Im} (a_j) x^j.
   \end{align*}
As usual, the identity transformation on a vector space
$\mscr$ is denoted by $I_{\mscr}$ and abbreviated to $I$ if
no confusion arises. Given a linear map $T\colon \mscr \to
\mscr$ and a vector $h\in \mscr$, we write $\cscr_T(h)$ for
the vector space spanned by $\{T^n h\colon n\in \zbb_+\}$.
   \section{\label{Sec2}Basic characterizations of $m$-isometries}
This section provides necessary facts on a finite
difference operator that will be used in
characterizing $m$-isometries.

If $k\in \{-\infty\} \cup \zbb_+$, $R = \cbb$ or
$R=\ogr{\hh}$ and $\{\gamma_n\}_{n=0}^{\infty} \subseteq
R$, then we say that $\gamma_n$ is a {\em polynomial in
$n$} (of {\em degree} $k$) if there exists a polynomial
$p\in R[x]$ (of degree $k$) such that $p(n)=\gamma_n$ for
all $n\in \zbb_+$. It follows from the Fundamental Theorem
of Algebra that such $p$ is unique. Denote by
$\cbb^{\zbb_+}$ the vector space of all complex sequences
$\{\gamma_n\}_{n=0}^{\infty}$ with linear operations
defined coordinatewise. We regard $\cbb^{\zbb_+}$ as a
topological vector space equipped with the topology of
pointwise convergence. Note that $\cbb^{\zbb_+}$ is, in
fact, a Frech\'{e}t space (see \cite[Remarks 1.38(c)]{Rud};
see also \cite[Ex.\ 13, p.\ 104]{con2}). Define the linear
transformation $\tscr\colon \cbb^{\zbb_+} \to
\cbb^{\zbb_+}$ by
   \begin{align*}
(\tscr \gammab)_n = \gamma_{n+1}, \quad n\in \zbb_+,
\, \gammab=\{\gamma_n\}_{n=0}^{\infty} \in
\cbb^{\zbb_+}.
   \end{align*}
Set $\triangle = \tscr - I$. It is easily seen that the
transformations $\tscr$ and $\triangle$ are continuous.
Applying Newton's binomial formula, we get
   \begin{align} \label{Newt1}
(\triangle^m \gammab)_n = (-1)^m \sum_{k=0}^m (-1)^k
\binom{m}{k} \gamma_{n+k}, \quad m,n\in\zbb_+, \,
\gammab=\{\gamma_n\}_{n=0}^{\infty} \in \cbb^{\zbb_+}.
   \end{align}
Using Newton's binomial formula again, we can
reproduce the original sequence $\gammab$ by means of
$\{(\triangle^k \gammab)_0\}_{k=0}^{\infty}$ as
follows:
   \allowdisplaybreaks
   \begin{align} \notag
\gamma_n = (\tscr^n \gammab)_0 & = ((\triangle + I)^n
\gammab)_0
   \\ \notag
& = \sum_{k=0}^n \binom{n}{k} (\triangle^{k}
\gammab)_0
      \\ \label{Newt2}
& = \sum_{k=0}^{\infty} \frac{(\triangle^{k}
\gammab)_0}{k!} (n)_k, \quad n\in\zbb_+, \,
\gammab=\{\gamma_n\}_{n=0}^{\infty} \in \cbb^{\zbb_+},
   \end{align}
where $(n)_k$ is a polynomial in $n$ of degree $k$ given by
   \begin{align*}
(n)_k =
   \begin{cases}
1 & \text{if } k=0 \text{ and } n \in \zbb_+,
   \\
\prod_{j=0}^{k-1} (n-j) & \text{if } k\in \nbb \text{
and } n \in \zbb_+.
   \end{cases}
   \end{align*}
Observe that $(n)_k=0$ for all $k>n$. The formula
\eqref{Newt2}, which is known as Newton's interpolation
formula, enables as to describe the kernel $\ker
\triangle^m$ of $\triangle^m$ (cf.\ \cite[Exercise
7.2]{Dick}).
   \begin{pro} \label{kerdelta}
If $m\in \nbb$, then
   \begin{align*}
\ker \triangle^m = \Big\{\{\gamma_n\}_{n=0}^{\infty}
\in \cbb^{\zbb_+}\colon \text{$\gamma_n$ is a
polynomial in $n$ of degree at most $m-1$}\Big\}.
   \end{align*}
   \end{pro}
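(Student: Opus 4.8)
The plan is to establish the two inclusions separately, using Newton's interpolation formula \eqref{Newt2} as the main tool; once that formula is available the argument is short.

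For the inclusion $\supseteq$, I would start from the elementary observation that $\triangle$ lowers the degree of a polynomial sequence by exactly one. Indeed, if $\gamma_n = p(n)$ for some $p \in \cbb[x]$, then $(\triangle \gammab)_n = p(n+1) - p(n)$ is the value at $n$ of the polynomial $p(x+1) - p(x)$, whose degree equals $\deg p - 1$ when $\deg p \Ge 1$ and which vanishes identically when $p$ is constant. Iterating this $m$ times shows that $\triangle^m$ annihilates every polynomial sequence of degree at most $m-1$, which gives $\supseteq$. Equivalently, one may expand $p$ in the factorial basis $\{(n)_k\}_{k=0}^{m-1}$ and invoke the identity $\triangle (n)_k = k\,(n)_{k-1}$, obtained by telescoping the product defining $(n)_k$, so that $\triangle^m (n)_k = 0$ for every $k < m$.

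For the inclusion $\subseteq$, suppose $\gammab \in \ker \triangle^m$. Then for every $k \Ge m$ we have $\triangle^k \gammab = \triangle^{k-m}(\triangle^m \gammab) = 0$, and in particular $(\triangle^k \gammab)_0 = 0$ whenever $k \Ge m$. Substituting this into \eqref{Newt2} truncates the series to the finite sum $\gamma_n = \sum_{k=0}^{m-1} \frac{(\triangle^{k} \gammab)_0}{k!}\,(n)_k$. Since each $(n)_k$ is a polynomial in $n$ of degree $k \Le m-1$, the right-hand side is a polynomial in $n$ of degree at most $m-1$, which completes $\subseteq$.

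I do not anticipate a genuine obstacle, since both directions reduce to bookkeeping once \eqref{Newt2} is in hand; the one point deserving care is that the Newton series is \emph{a priori} infinite. In the $\subseteq$ direction the vanishing of the higher-order differences collapses it to a finite sum, while in the $\supseteq$ direction the finite factorial expansion sidesteps convergence altogether, so no analytic issue arises.
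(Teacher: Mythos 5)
Your proof is correct and follows essentially the same route as the paper: the inclusion $\subseteq$ is obtained from Newton's interpolation formula \eqref{Newt2} after observing that the higher differences vanish, and the inclusion $\supseteq$ comes from the fact that $\triangle$ lowers the degree of a polynomial sequence by one, iterated (equivalently, by induction on $m$ as in the paper). You merely spell out the truncation of the Newton series more explicitly than the paper does, and offer the factorial-basis identity $\triangle (n)_k = k\,(n)_{k-1}$ as an equivalent variant; no substantive difference.
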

   \begin{proof}
The inclusion ``$\subseteq$'' is a direct consequence
of \eqref{Newt2}. In turn, the inclusion
``$\supseteq$'' can be proved by induction on $m$ by
using the fact that if $\gammab =
\{\gamma_n\}_{n=0}^{\infty} \in \cbb^{\zbb_+}$ and
$\gamma_n$ is a polynomial in $n$ of degree $m$, then
$(\triangle \gammab)_n$ is a polynomial in $n$ of
degree $m-1$.
   \end{proof}
Now we apply the above formulas to operators. For
this, we attach to $T\in \ogr{\hh}$ and $h\in \hh$,
the sequence $\gammab_{T,h} =
\{(\gammab_{T,h})_n\}_{n=0}^{\infty}$ defined by
   \begin{align*}
(\gammab_{T,h})_n=\|T^nh\|^2, \quad n\in \zbb_+.
   \end{align*}
Note that the formula (iii) of Proposition~\ref{Newt3}
below is due to Agler and Stankus (see \cite[Eq.\
(1.3)]{Ag-St1}).
   \begin{pro}\label{Newt3}
If $T\in \ogr{\hh}$, then the following formulas
hold{\em :}
   \begin{enumerate}
   \item[(i)] $(\triangle^m \gammab_{T,h})_n  = (\triangle^m
\gammab_{T,T^nh})_0$ for all $h \in \hh$ and $m,n\in
\zbb_+$,
   \item[(ii)]
$(\triangle^m \gammab_{T,h})_0 = (-1)^m\is{\bscr_m(T)h}{h}$
for all $h \in \hh$ and $m\in \zbb_+$,
   \item[(iii)] $T^{*n}T^n  = \sum_{k=0}^{\infty} (n)_k
\frac{(-1)^k}{k!} \bscr_k(T)$ for all $n\in \zbb_+$.
   \end{enumerate}
   \end{pro}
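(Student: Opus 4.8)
All three identities are consequences of the finite-difference formulas already established, so the plan is to prove them in order, with (ii) feeding directly into (iii).

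For (i), I would simply unwind \eqref{Newt1}. Writing $(\triangle^m \gammab_{T,h})_n$ out via \eqref{Newt1} gives $(-1)^m \sum_{k=0}^m (-1)^k \binom{m}{k} \|T^{n+k}h\|^2$. The one observation that does the work is $\|T^{n+k}h\|^2 = \|T^k(T^n h)\|^2 = (\gammab_{T,T^n h})_k$, so the sum is exactly $(\triangle^m \gammab_{T,T^n h})_0$ by \eqref{Newt1} read at index $0$. This is a pure reindexing and needs nothing beyond \eqref{Newt1}.

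For (ii), I would again invoke \eqref{Newt1} at $n=0$, obtaining $(\triangle^m \gammab_{T,h})_0 = (-1)^m \sum_{k=0}^m (-1)^k \binom{m}{k} \|T^k h\|^2$. Rewriting $\|T^k h\|^2 = \is{T^{*k}T^k h}{h}$ and pulling the finite sum inside the inner product identifies $\sum_{k=0}^m (-1)^k \binom{m}{k} T^{*k}T^k$ with $\bscr_m(T)$ by its very definition, which yields $(-1)^m \is{\bscr_m(T)h}{h}$.

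For (iii), the natural route is to apply Newton's interpolation formula \eqref{Newt2}. Since $\gammab_{T,h} \in \cbb^{\zbb_+}$ is a genuine scalar sequence, \eqref{Newt2} applies verbatim and there is no need to extend the difference calculus to operator-valued sequences. Fixing $h \in \hh$ and applying \eqref{Newt2} to $\gammab_{T,h}$ gives $\|T^n h\|^2 = \sum_{k=0}^{\infty} \frac{(\triangle^k \gammab_{T,h})_0}{k!}(n)_k$, where the sum is in fact finite because $(n)_k = 0$ for $k>n$. Substituting the value of $(\triangle^k \gammab_{T,h})_0$ from part (ii) turns this into $\is{T^{*n}T^n h}{h} = \is{\big(\sum_{k=0}^{\infty}(n)_k \tfrac{(-1)^k}{k!}\bscr_k(T)\big)h}{h}$. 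As this holds for every $h$, the two operators coincide. The only point demanding a moment's care is precisely this last passage, from equality of quadratic forms to equality of operators; since we work over a complex Hilbert space, the polarization identity makes the implication $\is{Ah}{h} = \is{Bh}{h}$ for all $h$ $\Rightarrow$ $A = B$ immediate, so no self-adjointness hypothesis is even required. The computations in (i) and (ii) are routine, and (iii) is the only step with any subtlety, all of it concentrated in that final form-to-operator identification.
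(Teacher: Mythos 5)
Your proof is correct and follows essentially the same route as the paper: (i) and (ii) are obtained by unwinding \eqref{Newt1}, and (iii) by applying Newton's interpolation formula \eqref{Newt2} to the scalar sequence $\gammab_{T,h}$ together with (ii). The paper leaves the final form-to-operator passage implicit, and your remark that polarization over a complex Hilbert space handles it without any self-adjointness hypothesis is exactly the right justification.
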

   \begin{proof}
Applying \eqref{Newt1} to $\gammab_{T,h}$, we obtain
(i) and (ii). In turn, applying \eqref{Newt2} to
$\gammab_{T,h}$ and using (ii), we get (iii).
   \end{proof}
Combining the conditions (i) and (ii) of
Proposition~\ref{Newt3}, we get the following
important property of $m$-isometries.
   \begin{cor}[\mbox{\cite[p.\ 389]{Ag-St1}}] \label{Agl-St}
If $m\in \nbb$ and $T\in \ogr{\hh}$ is an
$m$-isometry, then $T$ is a $k$-isometry for every
integer $k \Ge m$.
   \end{cor}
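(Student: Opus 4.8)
The plan is to deduce Corollary~\ref{Agl-St} directly from parts (i) and (ii) of Proposition~\ref{Newt3}, reducing everything to the elementary statement that a finite difference of a terminating sequence stays terminating. Fix $m\in\nbb$ and assume $T$ is an $m$-isometry, so $\bscr_m(T)=0$. By part (ii) this says $(\triangle^m \gammab_{T,h})_0 = (-1)^m\is{\bscr_m(T)h}{h}=0$ for every $h\in\hh$. My goal is to show that $(\triangle^k \gammab_{T,h})_0=0$ for every $k\Ge m$ and every $h$, since applying part (ii) once more (now with exponent $k$) will then give $\is{\bscr_k(T)h}{h}=0$ for all $h$; because $\bscr_k(T)$ is self-adjoint, the polarization identity forces $\bscr_k(T)=0$, i.e. $T$ is a $k$-isometry.

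The bridge between ``$k=m$ vanishes'' and ``all $k\Ge m$ vanish'' is part (i), which says $(\triangle^m \gammab_{T,h})_n = (\triangle^m \gammab_{T,T^nh})_0$. First I would apply (ii) to the vector $T^n h$ in place of $h$, obtaining $(\triangle^m \gammab_{T,T^nh})_0 = (-1)^m\is{\bscr_m(T)T^nh}{T^nh}=0$ because $\bscr_m(T)=0$. Combining this with (i), I conclude $(\triangle^m \gammab_{T,h})_n=0$ for \emph{all} $n\in\zbb_+$; in other words the entire sequence $\triangle^m \gammab_{T,h}$ is identically zero. It is then immediate that $\triangle^k \gammab_{T,h} = \triangle^{k-m}(\triangle^m \gammab_{T,h})$ is the zero sequence for every $k\Ge m$, and in particular its zeroth coordinate $(\triangle^k \gammab_{T,h})_0$ vanishes, which is exactly what the previous paragraph required.

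Alternatively, and perhaps more transparently, one can phrase the argument through Proposition~\ref{kerdelta}: the vanishing of $(\triangle^m\gammab_{T,T^nh})_0$ for all $n$ says precisely that $\gammab_{T,h}\in\ker\triangle^m$, so by Proposition~\ref{kerdelta} the sequence $n\mapsto\|T^nh\|^2$ is a polynomial in $n$ of degree at most $m-1$. A polynomial of degree at most $m-1$ has degree at most $k-1$ for every $k\Ge m$, hence $\gammab_{T,h}\in\ker\triangle^k$ as well, giving $(\triangle^k\gammab_{T,h})_0=0$ and thus $\bscr_k(T)=0$ after polarization.

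I do not anticipate a serious obstacle here; the corollary is essentially a bookkeeping consequence of the difference-operator machinery already assembled. The only point requiring a word of care is the passage from $\is{\bscr_k(T)h}{h}=0$ for all $h$ to $\bscr_k(T)=0$: this uses that $\bscr_k(T)$ is self-adjoint (clear from its defining formula, since each ${T^*}^jT^j$ is self-adjoint and the binomial coefficients are real) together with the polarization identity valid on a complex Hilbert space. Once that is noted, the proof is complete in a few lines.
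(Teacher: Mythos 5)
Your argument is correct and is precisely the one the paper intends: the paper's proof consists of the single sentence ``Combining the conditions (i) and (ii) of Proposition~\ref{Newt3}, we get the following important property of $m$-isometries,'' and your expansion --- using (ii) to see that $\triangle^m\gammab_{T,T^nh}$ vanishes at $0$, using (i) to conclude $\triangle^m\gammab_{T,h}\equiv 0$, applying $\triangle^{k-m}$, and recovering $\bscr_k(T)=0$ by polarization --- is exactly the combination being alluded to. No further comment is needed.
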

The rest of this section is devoted to characterizing
the class of $m$-isometric operators. Note that the
equivalence (i)$\Leftrightarrow$(ii) of
Theorem~\ref{Newt4} below is due to Agler and Stankus
(see \cite[p.\ 389]{Ag-St1}). Denote by
$\{e_n\}_{n=0}^{\infty}$ the standard orthonormal
basis of $\ell^2$, the Hilbert space of all square
summable complex sequences indexed by $\zbb_+$. For a
given bounded sequence $\{\lambda_n\}_{n=0}^{\infty}
\subseteq \cbb$ there exists a unique operator
$W\in\ogr{\ell^2}$, called a {\em unilateral weighted
shift} with weights $\{\lambda_n\}_{n=0}^{\infty}$,
such that
   \begin{align*}
We_n = \lambda_n e_{n+1}, \quad n\in \zbb_+.
   \end{align*}
   \begin{thm}\label{Newt4}
If $m\in \nbb$ and $T\in \ogr{\hh}$, then the
following conditions are equivalent{\em :}
   \begin{enumerate}
   \item[(i)] $T$ is an $m$-isometry,
   \item[(ii)] $T^{*n}T^n$ is
a polynomial in $n$ of degree at most $m-1$,
   \item[(iii)] for each $h\in \hh$,
$\|T^nh\|^2$ is a polynomial in $n$ of degree at most
$m-1$,
   \item[(iv)] $T$ is injective and for each nonzero $h\in
\hh$, the unilateral weighted shift $W_{T,h}$ with
weights\footnote{\;$W_{T,h}$ is bounded because the
sequence of its weights is bounded above by $\|T\|$.}
$\big\{\frac{\|T^{n+1}h\|}{\|T^n h\|}\big\}_{n=0}^{\infty}$
is an $m$-isometry.
   \end{enumerate}
Moreover, if $T$ is an $m$-isometry, then
   \begin{align*}
T^{*n}T^n = \sum_{k=0}^{m-1} (n)_k \frac{(-1)^k}{k!}
\bscr_k(T), \quad n\in \zbb_+.
   \end{align*}
   \end{thm}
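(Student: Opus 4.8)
The plan is to prove Theorem~\ref{Newt4} by establishing the cyclic chain of implications (i)$\Rightarrow$(ii)$\Rightarrow$(iii)$\Rightarrow$(iv)$\Rightarrow$(i), leaning on the finite-difference machinery already developed. The implication (i)$\Rightarrow$(ii) is immediate from Proposition~\ref{Newt3}(iii): if $T$ is an $m$-isometry, then $\bscr_k(T)=0$ for all $k\Ge m$ by Corollary~\ref{Agl-St}, so the infinite sum in (iii) truncates at $k=m-1$, exhibiting $T^{*n}T^n$ as a polynomial in $n$ of degree at most $m-1$ (this simultaneously yields the final displayed ``moreover'' formula). The implication (ii)$\Rightarrow$(iii) is trivial: if $T^{*n}T^n$ equals $p(n)$ for an operator-valued polynomial $p$ of degree at most $m-1$, then for each $h$ we have $\|T^nh\|^2 = \is{T^{*n}T^n h}{h} = \is{p(n)h}{h}$, which is a scalar polynomial in $n$ of the same degree bound.

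The implications that carry the genuine content involve the weighted-shift reformulation in (iv). For (iii)$\Rightarrow$(iv), first note that injectivity of $T$ is automatic: $m$-isometries are injective (stated in the text), but more directly, if $T^nh=0$ for some $h\neq 0$ then $\|T^nh\|^2$ would be a polynomial in $n$ vanishing for all large $n$ hence identically zero, forcing $\|h\|^2=0$, a contradiction. Given injectivity, the weights $\lambda_n := \|T^{n+1}h\|/\|T^nh\|$ are well-defined, and the key computation is that $W_{T,h}$ reproduces the norm sequence: $\|W_{T,h}^n e_0\|^2 = \prod_{j=0}^{n-1}\lambda_j^2 = \|T^nh\|^2/\|h\|^2$. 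Because $W_{T,h}$ is a weighted shift, its norm sequences starting from any basis vector $e_i$ are just shifts of this one, so $\|W_{T,h}^n e_i\|^2$ is again polynomial in $n$ of degree at most $m-1$; since $\{e_i\}$ spans a dense subspace and the $m$-isometry condition \eqref{miso-1} is a closed (indeed continuous) condition in $h$, one concludes $W_{T,h}$ is an $m$-isometry via criterion (iii) applied to $W_{T,h}$ itself.

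The last implication (iv)$\Rightarrow$(i) is where I expect the main obstacle to lie, since one must globalize a condition verified shift-by-shift back to $T$. The natural route is to fix an arbitrary nonzero $h$ and observe that $W_{T,h}$ being an $m$-isometry means, via \eqref{miso-1} applied to $e_0$, that $\sum_{k=0}^m(-1)^k\binom{m}{k}\|W_{T,h}^k e_0\|^2=0$; by the reproducing identity above this reads $\frac{1}{\|h\|^2}\sum_{k=0}^m(-1)^k\binom{m}{k}\|T^kh\|^2=0$, i.e.\ $(\triangle^m\gammab_{T,h})_0=0$ up to sign by \eqref{Newt1}. But to obtain $\bscr_m(T)=0$ one needs this vanishing not merely at the $0$-th term but with the shift built in, so that $(\triangle^m\gammab_{T,h})_n=0$ for all $n$; by Proposition~\ref{Newt3}(i) this follows by applying the $n=0$ conclusion to the vectors $T^nh$ in place of $h$, provided $T^nh\neq 0$, which holds by injectivity. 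Thus $(\triangle^m\gammab_{T,h})_n=0$ for all $n$, and Proposition~\ref{Newt3}(ii) converts this into $\is{\bscr_m(T)h}{h}=0$ for every $h\in\hh$; polarization then forces $\bscr_m(T)=0$, completing the proof. The delicate point to handle carefully is the transition through the weights of $W_{T,T^nh}$ versus the shifted weights of $W_{T,h}$, which must be reconciled so that the single scalar identity at $e_0$ genuinely propagates to all $n$ for the fixed operator $T$.
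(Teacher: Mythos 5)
Your chain (i)$\Rightarrow$(ii)$\Rightarrow$(iii)$\Rightarrow$(iv)$\Rightarrow$(i) differs from the paper, which proves (i)$\Rightarrow$(ii)$\Rightarrow$(iii)$\Rightarrow$(i) directly and then disposes of (i)$\Leftrightarrow$(iv) by citing an external result; your self-contained treatment of (iv)$\Rightarrow$(i) via the telescoping identity $\|W_{T,h}^k e_0\|^2=\|T^kh\|^2/\|h\|^2$, Proposition~\ref{Newt3} and polarization is correct and is arguably an improvement over the citation. However, as organized your argument is circular: inside the step (iii)$\Rightarrow$(iv) you conclude that $W_{T,h}$ is an $m$-isometry ``via criterion (iii) applied to $W_{T,h}$ itself,'' i.e.\ you invoke the implication (iii)$\Rightarrow$(i) of the very theorem you are proving, for a different operator. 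In your chain that implication is only available after completing the loop through (iv)$\Rightarrow$(i), so it cannot be used in the middle of (iii)$\Rightarrow$(iv). The repair is cheap and is exactly what the paper does: prove (iii)$\Rightarrow$(i) directly first --- if $\|T^nh\|^2$ is a polynomial of degree at most $m-1$ then $\gammab_{T,h}\in\ker\triangle^{m}$ by Proposition~\ref{kerdelta}, hence $\is{\bscr_m(T)h}{h}=(-1)^m(\triangle^m\gammab_{T,h})_0=0$ for all $h$ by Proposition~\ref{Newt3}(ii), and polarization gives $\bscr_m(T)=0$. With that in hand the equivalence of (i), (ii), (iii) is closed, and (iv) can be attached separately by your arguments.

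A secondary imprecision in (iii)$\Rightarrow$(iv): knowing that $\|W_{T,h}^n e_i\|^2$ is a polynomial of degree at most $m-1$ for each basis vector $e_i$ does not, by ``density plus closedness of the condition,'' yield the same for every $f\in\ell^2$, because the set of $f$ satisfying the quadratic identity $\is{\bscr_m(W_{T,h})f}{f}=0$ need not be a linear subspace, so vanishing on a total set is not automatically inherited by the closed linear span. What saves you is the orthogonality of the vectors $W_{T,h}^n e_i$ over $i$, which gives $\|W_{T,h}^n f\|^2=\sum_i|\is{f}{e_i}|^2\,\|W_{T,h}^n e_i\|^2$; applying the continuous operator $\triangle^m$ and using that $\ker\triangle^m$ is closed in the topology of pointwise convergence (Proposition~\ref{kerdelta}) then does the job --- this is precisely the argument the paper runs in the proof of Corollary~\ref{TrieuLe}, (iii)$\Rightarrow$(i). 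Equivalently, one may note that $\bscr_m(W_{T,h})$ is diagonal with respect to $\{e_i\}$, so its diagonal matrix entries vanishing forces it to be zero. Either way, state the mechanism; the appeal to density alone is not sound. The remaining steps --- (i)$\Rightarrow$(ii) with the ``moreover'' formula from Proposition~\ref{Newt3}(iii) and Corollary~\ref{Agl-St}, the triviality of (ii)$\Rightarrow$(iii), the injectivity argument, and (iv)$\Rightarrow$(i) --- are all fine.
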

   \begin{proof}
The implication (i)$\Rightarrow$(ii) and the
``moreover'' part are direct consequences of
Proposition~\ref{Newt3}(iii) and
Corollary~\ref{Agl-St}.

(ii)$\Rightarrow$(iii) Obvious.

(iii)$\Rightarrow$(i) Use Propositions~\ref{kerdelta}
and \ref{Newt3}(ii).

(i)$\Leftrightarrow$(iv) Apply
\cite[Proposition~2.4]{Ja-Ju-St}.
   \end{proof}
As shown below, the equivalence (i)$\Leftrightarrow$(iii)
of Corollary~\ref{TrieuLe}, which is due to Abdullah and Le
\cite[Theorem~2.1]{Ab-Le16}, is an almost immediate
consequence of the Agler-Stankus assertion ``Hence if
$T\in\mathcal L(\hh)$ one sees via (1.3) that $T$ is an
$m$-isometry if and only if $s_T(k)$ is a polynomial in $k$
of degree less than $m$'' (see \cite[p.\ 389]{Ag-St1}).
   \begin{cor} \label{TrieuLe}
Fix $m\in \nbb$. Let $W\in \ogr{\ell^2}$ be a unilateral
weighted shift with weights
$\{\lambda_n\}_{n=0}^{\infty}\subseteq \cbb$. Then the
following conditions are equivalent{\em :}
   \begin{enumerate}
   \item[(i)] $W$ is an $m$-isometry,
   \item[(ii)] $\|W^n e_0\|^2$ is a
polynomial in $n$ of degree at most $m-1$, where
$e_0=(1,0,0,\ldots)$,
   \item[(iii)] there exists a polynomial
$p\in \cbb[x]$ of degree at most $m-1$ such that
   \begin{align} \label{Frank}
\text{$p(n) > 0$ and $|\lambda_n|^2 =
\frac{p(n+1)}{p(n)}$ for all $n\in \zbb_+$.}
   \end{align}
   \end{enumerate}
   \end{cor}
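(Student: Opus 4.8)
The plan is to reduce everything to the diagonal structure of the operators $W^{*k}W^k$ and then to convert the resulting polynomial condition into the factorization in (iii) by a telescoping computation. Throughout I would write $\pi_n = \|W^n e_0\|^2$, so that $\pi_0 = 1$ and $\pi_n = \prod_{k=0}^{n-1}|\lambda_k|^2$ for $n\in\nbb$, and set $\boldsymbol\pi = \{\pi_n\}_{n=0}^\infty$.

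For (i)$\Leftrightarrow$(ii) I would first note that each $W^{*k}W^k$ is diagonal with respect to $\{e_j\}_{j=0}^\infty$, since $W^{*k}W^k e_j = \|W^k e_j\|^2 e_j$; hence $\bscr_m(W)$ is diagonal and vanishes if and only if all of its diagonal entries do. By Proposition~\ref{Newt3}(ii) the $j$-th entry is $(-1)^m(\triangle^m\gammab_{W,e_j})_0$, so $W$ is an $m$-isometry exactly when $(\triangle^m\gammab_{W,e_j})_0 = 0$ for every $j\in\zbb_+$. To compare these with $\boldsymbol\pi$, I would use the multiplicativity $\pi_{j+n} = \pi_j\|W^n e_j\|^2$ of the weight products; where $\pi_j\neq 0$ this gives $\gammab_{W,e_j} = \pi_j^{-1}\{\pi_{j+n}\}_{n=0}^\infty$ and therefore $(\triangle^m\gammab_{W,e_j})_0 = \pi_j^{-1}(\triangle^m\boldsymbol\pi)_j$.

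The two directions of (i)$\Leftrightarrow$(ii) then fall out once the bookkeeping on vanishing weights is settled. If some weight is zero, then $W$ is not injective and hence not an $m$-isometry, while simultaneously $\pi_n$ is eventually $0$ with $\pi_0 = 1$, so $\pi_n$ is no polynomial; thus (i) and (ii) fail together. Otherwise all $\pi_j > 0$, the displayed identity holds for every $j$, and $W$ is an $m$-isometry if and only if $\triangle^m\boldsymbol\pi = 0$, which by Proposition~\ref{kerdelta} says precisely that $\pi_n$ is a polynomial in $n$ of degree at most $m-1$, i.e.\ (ii). For (ii)$\Leftrightarrow$(iii) I would, assuming (ii), take $p\in\cbb[x]$ of degree at most $m-1$ with $p(n) = \pi_n$; the nonvanishing of the weights forces $p(n) = \pi_n > 0$ and $|\lambda_n|^2 = \pi_{n+1}/\pi_n = p(n+1)/p(n)$, which is \eqref{Frank}. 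Conversely, given (iii), the telescoping product $\prod_{k=0}^{n-1}\frac{p(k+1)}{p(k)} = \frac{p(n)}{p(0)}$ yields $\pi_n = p(n)/p(0)$, a polynomial in $n$ of degree at most $m-1$ since $p(0) > 0$, which is (ii).

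I expect the only delicate point to be the handling of possibly vanishing weights: one must check both that (ii) and (iii) each force injectivity and that the product and ratio identities linking $\gammab_{W,e_j}$, $\boldsymbol\pi$, and $p$ are used only where the denominators are positive. Once the diagonal reduction and Proposition~\ref{kerdelta} are available, the remaining computations are the elementary telescoping ones above.
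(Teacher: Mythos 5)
Your proof is correct, but it reaches the hard implication by a genuinely different route than the paper. The paper proves the cycle (i)$\Rightarrow$(ii)$\Rightarrow$(iii)$\Rightarrow$(i): the first step is just Theorem~\ref{Newt4}(iii), and the closing step (iii)$\Rightarrow$(i) expands $\|W^n h\|^2=\sum_{j}|\is{h}{e_j}|^2\,p(n+j)/p(j)$ for an \emph{arbitrary} $h\in\ell^2$ and then invokes the linearity and continuity of $\triangle$ on the Fr\'echet space $\cbb^{\zbb_+}$ together with Proposition~\ref{kerdelta} to conclude $\triangle^m\gammab_{W,h}=0$ for all $h$. You instead prove (i)$\Leftrightarrow$(ii) directly by exploiting the diagonal structure: $\bscr_m(W)$ is diagonal with $j$-th entry $(-1)^m(\triangle^m\gammab_{W,e_j})_0$, and the shift identity $\pi_{j+n}=\pi_j\|W^ne_j\|^2$ converts the vanishing of all these entries into $\triangle^m\{\pi_n\}_{n=0}^{\infty}=0$, i.e.\ into polynomiality of the single orbit of $e_0$ via Proposition~\ref{kerdelta}; the link to (iii) is then pure telescoping. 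Your version is more elementary and self-contained --- no topology on $\cbb^{\zbb_+}$, no series manipulation, and it makes transparent \emph{why} testing on $e_0$ alone suffices --- whereas the paper's continuity argument is a template that does not depend on the diagonal structure and reuses the already-proved Theorem~\ref{Newt4}. Your explicit treatment of vanishing weights is also slightly cleaner than the paper's, whose step (ii)$\Rightarrow$(iii) appeals to injectivity of $m$-isometries (formally part of (i)); as you note, (ii) alone forces $p(n)>0$, since $p(0)=1$ and a zero weight would make $p$ eventually, hence identically, zero.
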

   \begin{proof}
(i)$\Rightarrow$(ii) Apply Theorem~\ref{Newt4}(iii).

(ii)$\Rightarrow$(iii) Let $p\in \cbb[x]$ be a polynomial
of degree at most $m-1$ such that $\|W^n e_0\|^2 = p(n)$
for all $n\in \zbb_+$. Since $m$-isometries are injective,
$p(n) > 0$ for all $n \in \zbb_+$. This and
   \begin{align*}
|\lambda_n|^2 = \frac{\|W^{n+1} e_0\|^2}{\|W^n
e_0\|^2}, \quad n\in \zbb_+,
   \end{align*}
implies (iii).

(iii)$\Rightarrow$(i) Denote by
$\{e_n\}_{n=0}^{\infty}$ the standard orthonormal
basis for $\ell^2$. Since
   \begin{align*}
\|W^n h\|^2 = \sum_{j=0}^{\infty} |\is{h}{e_j}|^2
\|W^n e_j\|^2 \overset{\eqref{Frank}}=
\sum_{j=0}^{\infty} |\is{h}{e_j}|^2
\frac{p(n+j)}{p(j)}, \quad n \in \zbb_+, \, h \in
\ell^2,
   \end{align*}
and the transformation $\triangle\colon \cbb^{\zbb_+} \to
\cbb^{\zbb_+}$ is linear and continuous, we infer from
Proposition~\ref{kerdelta} that $\triangle^m \gammab_{W,h}
= 0$ for all $h\in \ell^2$. This together with
Proposition~\ref{Newt3}(ii) completes the proof.
   \end{proof}
   \section{\label{Sec3}``Local'' characterizations of $m$-isometries}
In this section we prove, among other things, that if
an operator $T\in\ogr{\hh}$ has the property that
$\|T^n h\|^2$ is a polynomial in $n$ for every $h\in
\hh$, then there exists $m\in \nbb$ such that $T$ is
an $m$-isometry (see Theorem~\ref{weakmiso} below).
Before we do this, we need three lemmata. The first is
patterned on the polarization formula for polynomials
on vector spaces (see \cite[Theorem~A]{Bo-Si}; see
also \cite{M-O}). Its routine verification is left to
the reader.
   \begin{lem} \label{polarf}
Let $\kbb\in \{\rbb,\cbb\}$, $\xscr$ be a vector space over
$\kbb$ and $\varphi\colon \xscr \times \xscr \to \kbb$ be a
mapping such that $\varphi(\cdot,h)$ and $\varphi(h,\cdot)$
are additive for every $h\in \xscr$. Then
   \begin{align*}
\varphi(h,h) = \frac{1}{2} \sum_{j=0}^2 (-1)^{j}
\binom{2}{j} \varphi(h_0+ j h, h_0+ j h), \quad h,h_0 \in
\xscr.
   \end{align*}
   \end{lem}
The second lemma gives a sufficient condition for a
sequence of bounded operators to have at least one
vanishing term.
   \begin{lem} \label{lnzero}
Let $\mathscr V$ be a nonempty open subset of $\hh$
and let $\{L_m\}_{m=1}^{\infty} \subseteq \ogr{\hh}$
be a sequence with the property that for every $h\in
\mathscr V$ there exists $m_h\in\nbb$ such that
   \begin{align*}
\is{L_{m_h}h}{h}=0.
   \end{align*}
Then there exists $m\in \nbb$ such that $L_{m}=0$.
   \end{lem}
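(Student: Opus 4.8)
The plan is to exchange quantifiers by a Baire category argument and then pass from a scalar identity on a ball to the operator identity by means of the polarization Lemma~\ref{polarf}. For each $m\in\nbb$ I would set $Z_m=\{h\in\hh\colon \is{L_m h}{h}=0\}$. Since $L_m$ is bounded, the map $h\mapsto\is{L_m h}{h}$ is continuous, so every $Z_m$ is closed in $\hh$, and the hypothesis says precisely that $\mathscr V\subseteq\bigcup_{m=1}^{\infty}Z_m$. Because $\mathscr V$ is nonempty and open, I fix a closed ball $\bar B\subseteq\mathscr V$ of positive radius; as a closed subset of the complete space $\hh$ it is itself a complete metric space, and it is covered by the relatively closed sets $Z_m\cap\bar B$. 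By the Baire category theorem one of these, say $Z_{m_0}\cap\bar B$, has nonempty interior relative to $\bar B$. Intersecting that relative interior with the open ball concentric with $\bar B$ (which is dense in $\bar B$ and open in $\hh$), I obtain a vector $h_1\in\hh$ and a radius $\rho>0$ with $\is{L_{m_0}h}{h}=0$ for every $h$ with $\|h-h_1\|<\rho$.

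Next I would remove the dependence on the centre $h_1$. Apply Lemma~\ref{polarf} with $\kbb=\cbb$, $\xscr=\hh$ and $\varphi(g,g')=\is{L_{m_0}g}{g'}$, which is additive in each variable, taking $h_0=h_1$. For any $h$ with $\|h\|<\rho/2$ the three vectors $h_1+jh$, $j=0,1,2$, all lie in the ball of radius $\rho$ about $h_1$, so every term $\varphi(h_1+jh,h_1+jh)$ vanishes; the lemma then yields $\is{L_{m_0}h}{h}=\varphi(h,h)=0$. Thus the quadratic form $h\mapsto\is{L_{m_0}h}{h}$ vanishes on a ball centred at the origin. By homogeneity, $\is{L_{m_0}(th)}{th}=|t|^2\is{L_{m_0}h}{h}$, so scaling shows it vanishes on all of $\hh$; the complex polarization identity then forces $\is{L_{m_0}h}{g}=0$ for all $h,g\in\hh$, i.e.\ $L_{m_0}=0$. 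Taking $m=m_0$ finishes the argument.

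I expect the Baire category step to be the crux. The pointwise hypothesis only supplies, for each individual vector, \emph{some} index that annihilates its numerical value, and the genuine content is upgrading this to a single index valid throughout a whole ball; the closedness of the $Z_m$ and the completeness inherited by $\bar B$ are exactly what make this possible. By contrast, the passage from the scalar vanishing to $L_{m_0}=0$ is routine given Lemma~\ref{polarf} together with the fact that over $\cbb$ a bounded operator whose numerical form vanishes identically is zero; the only care needed there is to keep the shifted arguments $h_1+jh$ inside the ball, which is why the factor $\rho/2$ appears.
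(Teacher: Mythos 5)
Your proposal is correct and follows essentially the same route as the paper: a Baire category argument to find one index $m_0$ whose quadratic form vanishes on a ball, followed by Lemma~\ref{polarf} and homogeneity to conclude $L_{m_0}=0$. The only cosmetic difference is that you run Baire on a closed ball inside $\mathscr V$, while the paper applies it directly to $\mathscr V$ (an open subset of a complete metric space being a Baire space); both are fine.
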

   \begin{proof}
Define for every $k\in \nbb$ the subset $\mathscr V_k$ of
$\mathscr V$ by
   \begin{align*}
\mathscr V_k = \big\{h\in \mathscr V\colon
\is{L_{k}h}{h}=0\big\}.
   \end{align*}
Clearly, each set $\mathscr V_k$ is relatively closed
in $\mathscr V$. By assumption $\mathscr V =
\bigcup_{k=1}^{\infty} \mathscr V_k$. According to the
Baire category theorem \cite[Theorem~48.2,
Lemma~48.4]{Munkres00}, there exist $m\in \nbb$,
$h_0\in \mathscr V_{m}$ and $\epsilon \in (0,\infty)$
such that
   \begin{align} \label{miso-a}
\text{$\is{L_{m}h}{h}=0$ for every $h\in \hh$ such
that $\|h-h_0\| < \epsilon$.}
   \end{align}
Applying Lemma~\ref{polarf} to
$\varphi(f,g)=\is{L_{m}f}{g}$, we see that the
following equalities hold for every $h\in \hh$ with
$\|h\| < \frac{1}{2} \epsilon$,
   \begin{align*}
\is{L_{m}h}{h} = \frac{1}{2} \sum_{j=0}^2 (-1)^{j}
\binom{2}{j} \is{L_{m}(h_0+ j h)}{h_0+ j
h}\overset{\eqref{miso-a}}=0.
   \end{align*}
By the homogeneity of $L_{m}$, this implies that
$\is{L_{m}h}{h}=0$ for all $h\in \hh$, or equivalently
that $L_{m}=0$. This completes the proof.
   \end{proof}
   The third lemma provides yet another
characterization of $m$-isometries.
   \begin{lem} \label{mialo-b}
If $m\in \nbb$, then an operator $T\in \ogr{\hh}$ is
an $m$-isometry if and only if there exists a nonempty
open subset $\mathscr V$ of $\hh$ such that for every
$h\in \mathscr V$, $\|T^nh\|^2$ is a polynomial in $n$
of degree at most $m-1$.
   \end{lem}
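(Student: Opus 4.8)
The plan is to prove the two implications separately, with essentially all of the work residing in the sufficiency direction. For the ``only if'' direction I would simply note that it is immediate from the machinery already in place: if $T$ is an $m$-isometry, then the implication (i)$\Rightarrow$(iii) of Theorem~\ref{Newt4} tells us that $\|T^n h\|^2$ is a polynomial in $n$ of degree at most $m-1$ for \emph{every} $h\in\hh$, so one may take $\mathscr V=\hh$, which is certainly a nonempty open subset.

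The substance lies in the ``if'' direction. Suppose $\mathscr V$ is a nonempty open set on which $\|T^n h\|^2$ is a polynomial in $n$ of degree at most $m-1$. Fixing $h\in\mathscr V$, this says precisely that the sequence $\gammab_{T,h}$ has polynomial entries of degree at most $m-1$, so by Proposition~\ref{kerdelta} we have $\gammab_{T,h}\in\ker\triangle^m$; in particular $(\triangle^m\gammab_{T,h})_0=0$. I would then invoke Proposition~\ref{Newt3}(ii), which identifies $(\triangle^m\gammab_{T,h})_0=(-1)^m\is{\bscr_m(T)h}{h}$, to conclude that
   \begin{align*}
\is{\bscr_m(T)h}{h}=0 \quad \text{for every } h\in\mathscr V.
   \end{align*}
Thus the task reduces to upgrading this vanishing of the quadratic form of the self-adjoint operator $\bscr_m(T)$ from the open set $\mathscr V$ to all of $\hh$.

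For that last step I would reuse the polarization device already isolated in the proof of Lemma~\ref{lnzero}. Since $\mathscr V$ is open and nonempty, choose $h_0\in\mathscr V$ and $\epsilon\in(0,\infty)$ with the ball of radius $\epsilon$ about $h_0$ contained in $\mathscr V$, and apply Lemma~\ref{polarf} to the biadditive form $\varphi(f,g)=\is{\bscr_m(T)f}{g}$. For any $h$ with $\|h\|<\tfrac12\epsilon$, each of the three vectors $h_0+jh$ with $j=0,1,2$ lies in that ball, so the polarization identity expresses $\is{\bscr_m(T)h}{h}$ as a combination of terms all equal to zero; homogeneity of $\bscr_m(T)$ then promotes this to $\is{\bscr_m(T)h}{h}=0$ for all $h\in\hh$, whence $\bscr_m(T)=0$ and $T$ is an $m$-isometry. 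The only real subtlety—and it is minor—is precisely this passage from an open set to the whole space, which is exactly the argument of Lemma~\ref{lnzero}; note, however, that here $m$ is fixed, so the Baire category step of that lemma is unnecessary and a bare polarization suffices.
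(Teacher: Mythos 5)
Your proof is correct and follows essentially the same route as the paper: the ``only if'' part via Theorem~\ref{Newt4}(iii), and the ``if'' part via Proposition~\ref{kerdelta} and Proposition~\ref{Newt3}(ii) to get $\is{\bscr_m(T)h}{h}=0$ on $\mathscr V$, then polarization to extend to all of $\hh$. The paper itself notes that one may conclude ``by Lemma~\ref{lnzero} or directly by Lemma~\ref{polarf},'' and your observation that the Baire category step is unnecessary here because $m$ is fixed matches the latter option exactly.
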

   \begin{proof}
The ``only if'' part follows from
Theorem~\ref{Newt4}(iii). To prove the ``if'' part,
take $h\in \mathscr V$. Since by assumption
$\|T^nh\|^2$ is a polynomial in $n$ of degree at most
$m-1$, we infer from Proposition~\ref{kerdelta} that
$\gammab_{T,h} \in \ker \triangle^{m}$. Hence, by
Proposition~\ref{Newt3}(ii), we have
   \begin{align*}
(-1)^{m} \is{\bscr_{m}(T)h}{h} = (\triangle^{m}
\gammab_{T,h})_0 = 0.
   \end{align*}
Hence $\is{\bscr_{m}(T)h}{h}=0$ for every $h\in
\mathscr V$. By Lemma~\ref{lnzero} or directly by
Lemma~\ref{polarf}, $\bscr_{m}(T)=0$, which means that
$T$ is an $m$-isometry.
   \end{proof}
   We are now ready to state and prove ``local''
characterizations of $m$-isometries. We also give a
topological description of certain sets of vectors $h$
having the property that $\|T^n h\|^2$ is a polynomial in
$n$ of prescribed degree, where $T$ is a strict
$m$-isometry. Note that the implication
(ii)$\Rightarrow$(i) of Theorem~\ref{weakmiso} below is not
true if we drop the assumption that $\hh$ is complete (see
Remark~\ref{wiel-nod}). Recall that $\cscr_T(h)$ is the
linear span of $\{T^n h\colon n\in \zbb_+\}$.
   \begin{thm} \label{weakmiso}
The following conditions are equivalent for $T\in
\ogr{\hh}${\em :}
   \begin{enumerate}
   \item[(i)] there exists $m\in \nbb$ such that $T$
is an $m$-isometry,
   \item[(ii)] for any  $h\in \hh$,
$\|T^nh\|^2$ is a polynomial in $n$,
   \item[(iii)] $\ker T=\{0\}$ and for any $h\in
\hh\setminus \{0\}$, there exists $m_h\in \nbb$ such
that the unilateral weighted shift $W_{T,h}$ with
weights $\big\{\frac{\|T^{n+1}h\|} {\|T^n
h\|}\big\}_{n=0}^{\infty}$ is an
\mbox{$m_h$-isometry},
   \item[(iv)] there exists a nonempty open subset $\mathscr V$
of $\hh$ such that for any $h\in \mathscr V$, there
exists $m_h\in\nbb$ such that
   \begin{align*}
\is{\bscr_{m_h}(T)h}{h}=0,
   \end{align*}
   \item[(v)] for any $h\in \hh$, there exists $m_h\in\nbb$
such that $T|_{\overline{\cscr_T(h)}}$ is an
$m_h$-isometry.
   \end{enumerate}
Moreover, if $T\in \ogr{\hh}$ is a strict
$m$-isometry, where $m \Ge 2$, then
   \begin{enumerate}
   \item[(a)] $\hh=\Big\{h \in \hh\colon \|T^nh\|^2 \text{ is a
polynomial in $n$ of degree at most $m-1$}\Big\}$,
   \item[(b)]
$\fscr:=\Big\{h \in \hh\colon \|T^nh\|^2 \text{ is a
polynomial in $n$ of degree at most $m-2$}\Big\}$ is a
closed nowhere dense subset of $\hh$,
   \item[(c)]
$\mathscr U:=\Big\{h \in \hh\colon \|T^nh\|^2 \text{
is a polynomial in $n$ of degree $m-1$}\Big\}$ is an
open dense subset of $\hh$.
   \end{enumerate}
   \end{thm}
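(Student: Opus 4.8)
The plan is to establish the equivalences (i)--(v) by a cycle of implications, leveraging the characterizations already proved in Sections~\ref{Sec2} and~\ref{Sec3}, and then to treat the ``moreover'' part (a)--(c) separately using the structure of a strict $m$-isometry together with the Baire category theorem. For the equivalences, I would argue along the scheme (i)$\Rightarrow$(ii)$\Rightarrow$(iv)$\Rightarrow$(i) to close one loop, then tie in (iii) and (v). The implication (i)$\Rightarrow$(ii) is immediate from Theorem~\ref{Newt4}(iii). For (ii)$\Rightarrow$(iv), note that if $\|T^nh\|^2$ is a polynomial in $n$ for \emph{every} $h$, then in particular on the open set $\mathscr V=\hh$ each $\gammab_{T,h}$ lies in $\ker\triangle^{m_h}$ for a suitable $m_h$, so Proposition~\ref{Newt3}(ii) yields $\is{\bscr_{m_h}(T)h}{h}=0$. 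The key step is (iv)$\Rightarrow$(i): here the hypothesis gives, for each $h$ in a nonempty open set $\mathscr V$, some $m_h$ with $\is{\bscr_{m_h}(T)h}{h}=0$; applying Lemma~\ref{lnzero} with $L_m=\bscr_m(T)$ produces a single $m$ for which $\bscr_m(T)=0$, i.e.\ $T$ is an $m$-isometry. This is exactly what Lemma~\ref{lnzero} was engineered to deliver, so the Baire-category content of the argument is already packaged there.

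To incorporate the remaining conditions, I would show (i)$\Leftrightarrow$(iii) and (i)$\Leftrightarrow$(v). For (i)$\Rightarrow$(iii), once $T$ is an $m$-isometry it is injective and Theorem~\ref{Newt4}(iv) says each $W_{T,h}$ is itself an $m$-isometry, so $m_h=m$ works uniformly. Conversely, (iii)$\Rightarrow$(ii) follows because $\|T^nh\|^2=\|W_{T,h}^n e_0\|^2 \cdot \|h\|^2$ (up to the normalization built into the weights), so $m_h$-isometricity of $W_{T,h}$ combined with Corollary~\ref{TrieuLe}(i)$\Rightarrow$(ii) forces $\|T^nh\|^2$ to be a polynomial in $n$. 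For (i)$\Leftrightarrow$(v): if $T$ is an $m$-isometry then so is its restriction to any invariant subspace, and $\overline{\cscr_T(h)}$ is invariant, giving (v) with $m_h=m$; conversely, (v)$\Rightarrow$(ii) because $h\in\overline{\cscr_T(h)}$, so $\|T^nh\|^2$ inherits polynomiality in $n$ from the $m_h$-isometry $T|_{\overline{\cscr_T(h)}}$ via Theorem~\ref{Newt4}(iii).

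For the ``moreover'' part, assume $T$ is a strict $m$-isometry with $m\Ge 2$. Part (a) is the direct content of Theorem~\ref{Newt4}(iii): $\bscr_m(T)=0$ forces $\gammab_{T,h}\in\ker\triangle^m$, hence $\|T^nh\|^2$ has degree at most $m-1$ for every $h$. For (b), observe that $h\in\fscr$ iff $\gammab_{T,h}\in\ker\triangle^{m-1}$ iff $\is{\bscr_{m-1}(T)h}{h}=0$ by Proposition~\ref{Newt3}(ii); since each condition $\is{\bscr_{m-1}(T)h}{h}=0$ is closed, $\fscr$ is closed, and it is proper because strictness means $\bscr_{m-1}(T)\neq 0$, so by Lemma~\ref{polarf} there is some $h_0$ with $\is{\bscr_{m-1}(T)h_0}{h_0}\neq 0$. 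The nowhere-density then follows from the homogeneity/polarization argument: if $\fscr$ contained a ball, Lemma~\ref{polarf} would force $\bscr_{m-1}(T)=0$, contradicting strictness. Part (c) is then immediate, since $\mathscr U=\hh\setminus\fscr$ by (a), and the complement of a closed nowhere dense set is open and dense.

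The main obstacle I anticipate is keeping the normalization in condition (iii) straight: the weighted shift $W_{T,h}$ uses weights $\|T^{n+1}h\|/\|T^nh\|$, so one must verify that $\|W_{T,h}^n e_0\|^2$ equals $\|T^nh\|^2/\|h\|^2$ and handle the injectivity hypothesis ($\ker T=\{0\}$) carefully to ensure no weight is zero and the shift is well-defined; this is precisely the bookkeeping encapsulated in \cite[Proposition~2.4]{Ja-Ju-St} and Theorem~\ref{Newt4}(iv), so I would cite those rather than recompute. The genuinely substantive step remains (iv)$\Rightarrow$(i), but its difficulty has been absorbed into Lemma~\ref{lnzero}.
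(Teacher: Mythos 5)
Your treatment of the equivalences (i)--(v) is correct and essentially the paper's own argument, just with the implications reordered into the cycle (i)$\Rightarrow$(ii)$\Rightarrow$(iv)$\Rightarrow$(i) plus the side equivalences with (iii) and (v): the Baire-category content of (iv)$\Rightarrow$(i) is delegated to Lemma~\ref{lnzero} exactly as in the paper, (iii) is handled via Theorem~\ref{Newt4}(iv) and the normalization $\|W_{T,h}^n e_0\|^2=\|T^nh\|^2/\|h\|^2$, and parts (a) and (c) of the ``moreover'' statement coincide with the paper's. The one place you genuinely diverge is the closedness of $\fscr$ in (b). The paper proves it by noting that $h\mapsto\gammab_{T,h}$ is continuous into $\cbb^{\zbb_+}$ and that the finite-dimensional subspace of polynomial sequences of degree at most $m-2$ is closed there; you instead identify $\fscr$ with the zero set of the continuous form $h\mapsto\is{\bscr_{m-1}(T)h}{h}$. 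That identification is true but is not delivered by Proposition~\ref{Newt3}(ii) alone: that proposition only gives $(\triangle^{m-1}\gammab_{T,h})_0=(-1)^{m-1}\is{\bscr_{m-1}(T)h}{h}$, so vanishing of the form controls only the \emph{zeroth} term of $\triangle^{m-1}\gammab_{T,h}$, whereas $h\in\fscr$ requires the whole sequence to vanish. You need the standing hypothesis that $T$ is an $m$-isometry to close the gap: then $\triangle^{m}\gammab_{T,h}=0$, so $\triangle^{m-1}\gammab_{T,h}$ is constant in $n$ and its vanishing at $n=0$ forces it to vanish identically. With that one-line addition your version of (b) is a legitimate (and arguably slicker) alternative, and your polarization argument for nowhere-density is the same mechanism the paper packages as Lemma~\ref{mialo-b}.
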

   \begin{proof}
The implications (i)$\Rightarrow$(v) and
(v)$\Rightarrow$(iv) are obvious due to
\eqref{miso-1}. The implication (iv)$\Rightarrow$(i)
follows from Lemma~\ref{lnzero}, while the implication
(i)$\Rightarrow$(iii) is a direct consequences of
Theorem~\ref{Newt4}(iv).

(iii)$\Rightarrow$(ii) Assume that (iii) holds. Fix a
nonzero $h\in \hh$. By assumption, there is $m_h\in \nbb$
such that $W_{T,h}$ is an $m_h$-isometry. Let
$e_0=(1,0,0,\ldots)$ be the zeroth term of the standard
orthonormal bass of $\ell^2$. By Theorem~\ref{Newt4}(iii),
$\|W_{T,h}^ne_0\|^2$ is a polynomial in $n$ of degree at
most $m_h-1$; denote it by $p$. Then
   \begin{align*}
p(n)=\|W_{T,h}^ne_0\|^2 = \frac{\|T^n h\|^2}{\|h\|^2},
\quad n\in \zbb_+,
   \end{align*}
which yields (ii).

We now prove that (ii) implies (iv). For this, assume
that (ii) holds. Take $h\in \hh\setminus \{0\}$. Then
by assumption $\|T^nh\|^2$ is a polynomial in $n$ of
some degree $k_h\in \zbb_+$. By
Proposition~\ref{kerdelta}, $\gammab_{T,h} \in \ker
\triangle^{m_h}$ with $m_h=k_h+1$. Hence, by
Proposition~\ref{Newt3}(ii), we have
   \begin{align*}
(-1)^{m_h} \is{\bscr_{m_h}(T)h}{h} = (\triangle^{m_h}
\gammab_{T,h})_0 = 0.
   \end{align*}
Therefore (iv) is valid. This means that the
conditions (i)-(v) are equivalent.

It remains to prove the moreover part. The condition
(a) is a direct consequence of
Theorem~\ref{Newt4}(iii). To prove (b), recall that
$\cbb^{\zbb_+}$ is a topological vector space with the
topology of pointwise convergence on $\zbb_+$ denoted
here by $\tau$. Write $\mscr$ for the vector subspace
of $\cbb^{\zbb_+}$ which consists of all polynomials
in $n$ of degree at most $m-2$. We first show that
$\fscr$ is a closed subset $\hh$. Indeed, if
$\{h_k\}_{k=1}^{\infty} \subseteq \fscr$ converges to
$h\in \hh$, then by the continuity of $T$, the
sequence $\{\gammab_{T,h_k}\}_{k=1}^{\infty} \subseteq
\mscr$ is $\tau$-convergent to $\gammab_{T,h}$ in
$\cbb^{\zbb_+}$. Since any finite dimensional vector
subspace of a topological vector space is closed, we
deduce that $\gammab_{T,h} \in \mscr$, which means
that $h\in \fscr$. Hence $\fscr$ is closed. Suppose,
to the contrary, that $\fscr$ is not a nowhere dense
subset of $\hh$. Then, by Lemma~\ref{mialo-b}, $T$ is
an $(m-1)$-isometry, which contradicts our assumption
that $T$ is a strict $m$-isometry. Finally, since by
(a), $\mathscr U=\hh\setminus \fscr$, the condition
(c) follows from (b). This completes the proof.
   \end{proof}
   \begin{rem}
Regarding the moreover part of Theorem~\ref{weakmiso},
it is worth pointing out that for any integer $m\Ge
2$, there exists a strict $m$-isometry. This was shown
by Athavale in the case of infinite dimensional
separable Hilbert spaces (see
\cite[Proposition~8]{At91}). In turn, in view of
Proposition~\ref{m-iso-alg} below, for any positive
odd number $m$, there exists a strict $m$-isometry
which is algebraic; what is more, this may happen even
in finite dimensional Hilbert spaces (see
Remark~\ref{inv-miso}). This justifies the following
construction. Take integers $m_1$ and $m_2$ such that
$2 \Le m_1 < m_2$. Let for $j=1,2$, $T_j\in
\ogr{\hh_j}$ be a strict $m_j$-isometry. Set
$\hh=\hh_1 \oplus \hh_2$ and $T=T_1 \oplus T_2$. Using
Theorem~\ref{Newt4}(iii) we deduce that $T$ is a
strict $m_2$-isometry. It is clear that for every $h
\in \hh_1 \oplus \{0\}$, $\|T^nh\|^2$ is a polynomial
in $n$ of degree at most $m_1-1$, hence of degree at
most $m_2-2$. In view of Theorem~\ref{weakmiso}, the
set $\mathscr U$ is open and dense in $\hh$ and its
complement $\fscr$ contains the closed vector subspace
$\hh_1 \oplus \{0\}$. \hfill{$\diamondsuit$}
   \end{rem}
   \section{\label{Sec4}$m$-isometries on inner product spaces}
Motivated by our investigations and needs of the
theory of unbounded operators (see e.g.,
\cite{Ja-St01}), we consider here $m$-isometries on
inner product spaces. We do not assume that the
operators in question are continuous. Suppose $\mscr$
is an inner product space and $\hh$ is its Hilbert
space completion. Denote by $\lino{\mscr}$ the algebra
of all linear operators on $\mscr$. A member of
$\lino{\mscr}$ can be though of as a densely defined
operator in $\hh$ with invariant domain $\mscr$. Given
$T\in \lino{\mscr}$ and $m\in \nbb$, we say that $T$
is an {\em $m$-isometry}\/\footnote{\;Clearly, if
$\mscr=\hh$ and $T\in \ogr{\hh}$, then both
definitions of $m$-isometricity, the present one and
that of Section~\ref{Sec1}, coincide (cf.\
\eqref{miso-1}).} if
   \begin{align} \label{def-miso}
\hat\fcal_{T;m}(h):=\sum\limits_{k=0}^m (-1)^k
\binom{m}{k} \|T^k h\|^2 = 0, \quad h \in \mscr.
   \end{align}
Similarly to Section \ref{Sec1}, we define the notion
of a strict $m$-isometry. Arguing as in
Section~\ref{Sec2}, we verify that $T$ is an
$m$-isometry if and only if for each $h\in \mscr$,
$\|T^nh\|^2$ is a polynomial in $n$ of degree at most
$m-1$ (cf.\ Theorem~\ref{Newt4}). Consequently, if $T$
is an $m$-isometry, then $T$ is a $k$-isometry for
every integer $k \Ge m$. If $\mscr$ is infinite
dimensional it may happen that an $m$-isometry on
$\mscr$ is closed as an operator in $\hh$ but not
bounded (see e.g.\ \cite[Example~6.4]{Ja-St01}).

We begin our investigations by making the following
observation.
   \begin{pro} \label{polar-1}
If $T\in\lino{\mscr}$ is an $m$-isometry, then for all
$f,g\in \mscr$, $\is{T^n f}{T^n g}$ is a polynomial in
$n$ of degree at most $m-1$, and
   \begin{align*}
\is{T^n f}{T^n g}= \sum_{k=0}^{m-1} (n)_k
\frac{(-1)^k}{k!} \fcal_{T;k}(f,g), \quad n\in \zbb_+,
\, f, g \in \mscr,
   \end{align*}
where $\fcal_{T;k}$ is the sesquilinear form on
$\mscr$ defined by
   \begin{align*}
\fcal_{T;k}(f,g) = \sum_{j=0}^k (-1)^j \binom{k}{j}
\is{T^j f}{T^j g}, \quad f,g \in \mscr, \, k \in
\zbb_+.
   \end{align*}
   \end{pro}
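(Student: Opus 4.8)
The plan is to fix $f,g\in\mscr$ and pass to the scalar sequence $\gammab=\{\gamma_n\}_{n=0}^{\infty}$ defined by $\gamma_n=\is{T^nf}{T^ng}$, thereby reducing the whole statement to the finite-difference calculus of Section~\ref{Sec2}. The key observation is that the asserted closed formula is simply Newton's interpolation formula \eqref{Newt2} applied to this $\gammab$: by \eqref{Newt1} one has $(\triangle^k\gammab)_0=(-1)^k\sum_{j=0}^k(-1)^j\binom{k}{j}\is{T^jf}{T^jg}=(-1)^k\fcal_{T;k}(f,g)$, so inserting this into \eqref{Newt2} yields $\gamma_n=\sum_{k=0}^{\infty}(n)_k\frac{(-1)^k}{k!}\fcal_{T;k}(f,g)$. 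Hence both assertions follow once I establish that $\gammab\in\ker\triangle^m$: Proposition~\ref{kerdelta} then shows $\is{T^nf}{T^ng}$ is a polynomial in $n$ of degree at most $m-1$, while $(\triangle^k\gammab)_0=0$ for $k\Ge m$ truncates the above sum to $k=0,\dots,m-1$.

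To check $\gammab\in\ker\triangle^m$ I would compute $(\triangle^m\gammab)_n$ from \eqref{Newt1}. Writing $\gamma_{n+k}=\is{T^k(T^nf)}{T^k(T^ng)}$ and using the invariance of $\mscr$ under $T$, the indices reorganize to give $(\triangle^m\gammab)_n=(-1)^m\fcal_{T;m}(T^nf,T^ng)$ for every $n$. Consequently it is enough to prove that the sesquilinear form $\fcal_{T;m}$ vanishes identically on $\mscr\times\mscr$.

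This last vanishing is the only step that is not a routine transcription, and it is where the complex scalar field is used essentially. By the definition \eqref{def-miso} of an $m$-isometry, the diagonal values $\fcal_{T;m}(h,h)=\hat\fcal_{T;m}(h)$ vanish for all $h\in\mscr$; since $\mscr$ is a complex vector space and $\fcal_{T;m}$ is sesquilinear, the complex polarization identity recovers $\fcal_{T;m}(f,g)$ from its diagonal values, forcing $\fcal_{T;m}\equiv0$. This is the same mechanism behind the implication ``$\is{L_m h}{h}=0$ for all $h\Rightarrow L_m=0$'' invoked in the proof of Lemma~\ref{lnzero}; I expect it to be the main (albeit standard) point of the argument. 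Feeding $\fcal_{T;m}\equiv0$ back into the formula for $(\triangle^m\gammab)_n$ gives $\gammab\in\ker\triangle^m$ and completes the proof.
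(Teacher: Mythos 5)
Your proposal is correct and follows essentially the same route as the paper, whose entire proof is the one-line instruction to ``argue as in Section~\ref{Sec2} and apply the polarization formula'' --- i.e.\ exactly the combination of the finite-difference calculus (\eqref{Newt1}, \eqref{Newt2}, Proposition~\ref{kerdelta}) with complex polarization that you carry out. The only (cosmetic) difference is where polarization enters: the paper polarizes the form $(f,g)\mapsto\is{T^nf}{T^ng}$ to reduce to the diagonal sequences $\|T^n h\|^2$ already treated in Section~\ref{Sec2}, whereas you run the difference calculus directly on the complex sequence $\is{T^nf}{T^ng}$ and polarize only $\fcal_{T;m}$ to upgrade $\hat\fcal_{T;m}\equiv 0$ to $\fcal_{T;m}\equiv 0$.
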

   \begin{proof}
We can argue as in Section~\ref{Sec2} and apply the
polarization formula to the sesquilinear form
$\mscr\times\mscr \ni (f,g) \to \is{T^n f}{T^n g} \in
\cbb$.
   \end{proof}
Recall that an operator $N \in \lino{\mscr}$ is said
to be a {\em nilpotent operator} if there exists $k\in
\nbb$ such that $N^k=0$. If additionally $N^{k-1}\neq
0$, then $k$ is called the {\em index of nilpotency}
of $N$ and is denoted here by $\nil{N}$; note that
then linear dimension of $\mscr$ must be at least
$\nil{N}$, so in particular $\mscr\neq \{0\}$. Again,
as in the case of $m$-isometries, if $\mscr$ is
infinite dimensional it may happen that a nilpotent
operator on $\mscr$ is closed as an operator in $\hh$
but not bounded (see \cite[Example~3.3]{Ota88}).

Theorem~\ref{Bang1-m}(i) below was proved by
Berm\'{u}dez, Martin\'{o}n, M\"{u}ller and Noda in the
case of bounded Hilbert space operators (see
\cite[Theorem~3]{Ber-Mar-Mul-No14}). In turn,
Theorem~\ref{Bang1-m}(ii) was shown by Le and
independently by Gu and Stankus, again for Hilbert
space operators (see \cite[Theorem~16]{Le15} and
\cite[Theorem~4]{Gu-St15}, resp.). The proof of
Theorem~\ref{Bang1-m}(i) is an adaptation (and a
simplification) of that of
\cite[Theorem~3]{Ber-Mar-Mul-No14} to the context of
inner product spaces.
   \begin{thm} \label{Bang1-m}
Suppose $m\in \nbb$, $A\in \lino{\mscr}$ is an
$m$-isometry and $N\in \lino{\mscr}$ is a nilpotent
operator such that $AN=NA$. Set $m_N=m +
2(\nil{N}-1)$. Then
   \begin{enumerate}
   \item[(i)] $A+N$ is an $m_N$-isometry,
   \item[(ii)] $A+N$ is a strict $m_N$-isometry
if and only if there exists $f_0\in \mscr$ such~that
   \begin{align*}
\sum_{l=0}^{m-1} (-1)^{l} \binom{m-1}{l} \|A^{l}
N^{\nil{N}-1} f_0\|^2 \neq 0.
   \end{align*}
   \end{enumerate}
Moreover, if $A+N$ is a strict $m_N$-isometry, then
$A$ is a strict $m$-isometry.
   \end{thm}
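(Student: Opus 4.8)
The plan is to prove the theorem by showing directly that for every $h\in\mscr$ the sequence $n\mapsto \|(A+N)^n h\|^2$ is a polynomial in $n$ of degree at most $m_N-1$, which by the inner-product-space characterization recalled after \eqref{def-miso} (the analogue of Theorem~\ref{Newt4}(iii)) is exactly (i). Since $A$ and $N$ commute and $N^{\nil{N}}=0$, the binomial theorem gives, for every $n\in\zbb_+$,
   \begin{align*}
(A+N)^n h = \sum_{k=0}^{\nil{N}-1}\binom{n}{k} A^{n-k}N^k h,
   \end{align*}
with the convention that $\binom{n}{k}=0$ when $k>n$, so that no genuinely negative power of $A$ occurs. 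Expanding the squared norm yields a double sum of terms $\binom{n}{j}\binom{n}{k}\is{A^{n-j}N^j h}{A^{n-k}N^k h}$ with $0\Le j,k\Le \nil{N}-1$.

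The key manipulation is to reduce each inner product to the diagonal form handled by Proposition~\ref{polar-1}. Assuming $j\Ge k$ (the case $j<k$ being symmetric) and using $AN=NA$, I would write $A^{n-k}N^k h = A^{n-j}\big(A^{j-k}N^k h\big)$, so that $\is{A^{n-j}N^j h}{A^{n-k}N^k h} = \is{A^{n-j}u}{A^{n-j}w}$ with $u=N^j h$ and $w=A^{j-k}N^k h$. By Proposition~\ref{polar-1}, $\is{A^s u}{A^s w}$ is a polynomial in $s$ of degree at most $m-1$; substituting $s=n-j$ turns each inner product into a polynomial in $n$ of degree at most $m-1$. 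As $\binom{n}{j}$ and $\binom{n}{k}$ are polynomials in $n$ of degrees $j$ and $k$, each summand is a polynomial in $n$ of degree at most $j+k+(m-1)\Le 2(\nil{N}-1)+(m-1)=m_N-1$. The only point needing care is that this identity must hold for every $n\in\zbb_+$, not merely for large $n$: one uses that $(n)_j/j!=\binom{n}{j}$ vanishes for integers $0\Le n<j$, so that the spurious terms in which $A^{n-j}$ would carry a negative exponent are annihilated by a zero binomial factor and may be read off as the value at $n$ of the corresponding polynomial. This establishes (i).

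For (ii) I would extract the coefficient of $n^{m_N-1}$ in the polynomial $Q_h(n):=\|(A+N)^n h\|^2$. A summand can attain degree $m_N-1$ only when $j+k=2(\nil{N}-1)$ and its inner-product factor has degree exactly $m-1$, which forces $j=k=\nil{N}-1$. Thus the top coefficient comes solely from $\big(\tfrac{(n)_{\nil{N}-1}}{(\nil{N}-1)!}\big)^2\,\|A^{\,n-(\nil{N}-1)}N^{\nil{N}-1}h\|^2$. Since $(n)_{\nil{N}-1}$ is monic and a shift of the argument preserves the leading coefficient, the coefficient of $n^{m_N-1}$ in $Q_h$ equals $\frac{1}{((\nil{N}-1)!)^2}$ times the leading coefficient of $s\mapsto\|A^s N^{\nil{N}-1}h\|^2$, which by the explicit formula in Proposition~\ref{polar-1} is $\frac{(-1)^{m-1}}{(m-1)!}\fcal_{A;m-1}(N^{\nil{N}-1}h,N^{\nil{N}-1}h)$, a nonzero multiple of $\sum_{l=0}^{m-1}(-1)^l\binom{m-1}{l}\|A^l N^{\nil{N}-1}h\|^2$. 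Hence $Q_h$ has degree exactly $m_N-1$ for some $h$, equivalently $A+N$ fails to be an $(m_N-1)$-isometry, precisely when the displayed sum is nonzero for some $f_0$, which is (ii).

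Finally, the moreover part is immediate from (ii): if $A+N$ is a strict $m_N$-isometry, the witness $f_0$ satisfies $\fcal_{A;m-1}(N^{\nil{N}-1}f_0,N^{\nil{N}-1}f_0)\neq 0$, so $s\mapsto\|A^s N^{\nil{N}-1}f_0\|^2$ has degree exactly $m-1$; this rules out $A$ being an $(m-1)$-isometry when $m\Ge 2$ and, when $m=1$, forces $\mscr\neq\{0\}$, so $A$ is a strict $m$-isometry in either case. I expect the main obstacle to be purely bookkeeping: justifying the degree count together with the all-$n$ (not merely large-$n$) validity of the polynomial identity, and isolating the single top-degree contribution so as to match its coefficient with the stated sum without hidden cancellation.
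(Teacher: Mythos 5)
Your proposal is correct and follows essentially the same route as the paper: expand $(A+N)^n h$ by the binomial theorem, use commutativity to rewrite each cross term as $\is{A^{s}u}{A^{s}w}$ so that Proposition~\ref{polar-1} makes it a polynomial of degree at most $m-1$, count degrees to get (i), and observe that the unique summand with $j=k=\nil{N}-1$ carries the top coefficient $\frac{(-1)^{m-1}}{((\nil{N}-1)!)^2(m-1)!}\fcal_{A;m-1}(N^{\nil{N}-1}h,N^{\nil{N}-1}h)$, which yields (ii) and the moreover part. The bookkeeping issues you flag (vanishing binomial factors for $n<j$, shift-invariance of the leading coefficient) are handled in the paper in the same way, via the truncation $\kappa_n=\min\{n,\nil{N}-1\}$ and the identity $(n)_l=0$ for $l>n$.
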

   \begin{proof}
(i) Set $k=\nil{N}$ and $\kappa_n=\min\{n,k-1\}\in \zbb_+$
for $n\in \zbb_+$. By assumption and
Proposition~\ref{polar-1}, there exist polynomials
$p_{i,j;f}, q_{i,j;f} \in \cbb[x]$ such~that
   \allowdisplaybreaks
   \begin{align} \label{iso-21}
\left\langle A^{n}\Big(A^{j-i}N^{i}f\Big),
A^{n}\Big(N^{j}f\Big)\right\rangle = p_{i,j;f}(n),
\quad 0\Le i< j, \, n\in \zbb_+, \, f\in \mscr,
   \\ \label{iso-22}
\left\langle A^{n}\Big(N^{i}f\Big),
A^{n}\Big(A^{i-j}N^{j}f\Big)\right\rangle =
q_{i,j;f}(n), \quad 0\Le j\Le i, n\in \zbb_+, \, f\in
\mscr.
   \end{align}
Using Newton's binomial formula, we see that
   \allowdisplaybreaks
   \begin{align}  \notag
\|(A+N)^n f\|^2 & = \left\langle \sum_{i=0}^{\kappa_n}
\frac{(n)_i}{i!} A^{n-i}N^if, \sum_{j=0}^{\kappa_n}
\frac{(n)_j}{j!} A^{n-j}N^jf\right\rangle
   \\ \notag
& = \sum_{0\Le i < j \Le \kappa_n} \frac{(n)_i}{i!}
\frac{(n)_j}{j!} \left\langle
A^{n-j}\Big(A^{j-i}N^{i}f\Big),
A^{n-j}\Big(N^{j}f\Big)\right\rangle
   \\  \notag
& \hspace{4ex}+ \sum_{0\Le j \Le i \Le \kappa_n}
\frac{(n)_i}{i!} \frac{(n)_j}{j!} \left\langle
A^{n-i}\Big(N^{i}f\Big),
A^{n-i}\Big(A^{i-j}N^{j}f\Big)\right\rangle
   \\ \notag
& \hspace{-3.65ex}\overset{\eqref{iso-21} \&
\eqref{iso-22}}= \sum_{0\Le i < j \Le k-1}
\frac{(n)_i}{i!} \frac{(n)_j}{j!} p_{i,j;f}(n-j)
   \\ \label{zeg-1}
& \hspace{8ex} + \sum_{0\Le j \Le i \Le k-1}
\frac{(n)_i}{i!} \frac{(n)_j}{j!} q_{i,j;f}(n-i),
\quad n \in \zbb_+, f\in \mscr.
   \end{align}
Since $(n)_l$ is a polynomial in $n$ of degree $l$
and, by Proposition~\ref{polar-1}, $p_{i,j;f},
q_{i,j;f}$ are polynomials of degree at most $m-1$, we
conclude that $\|(A+N)^n f\|^2$ is a polynomial in $n$
of degree at most $m - 1 + 2(k-1)$. Therefore, $A+N$
is $m_N$-isometry.

(ii) Observe that by Proposition~\ref{polar-1}, the
coefficient of the polynomial $q_{k-1,k-1;f}$ that
corresponds to the monomial $x^{m-1}$ equals
$\frac{(-1)^{m-1}}{(m-1)!}\fcal_{A;m-1}(N^{k-1} f,
N^{k-1}f)$. This together with \eqref{zeg-1} yields (ii).

The ``moreover'' part is a consequence of (ii). This
completes the proof.
   \end{proof}
Regarding the moreover part of Theorem~\ref{Bang1-m},
it is worth pointing out that $A+N$ may not be a
strict $m_N$-isometry even if $A$ is a strict
$m$-isometry (see the paragraph preceding
\cite[Theorem 3]{Ber-Mar-Mul-No14}).

The question of when a scalar translate of a nilpotent
operator is an $m$-isometry is answered by
Proposition~\ref{Bang1} below. The assertion (i) of
this proposition was proved by Berm\'{u}dez,
Martin\'{o}n and Noda in the case of bounded Hilbert
space operators (see
\cite[Theorem~2.2]{Ber-Mar-No13}). Before proving
Proposition~\ref{Bang1}, we state a simple lemma whose
proof is left to the reader.
   \begin{lem} \label{lim-1n}
If $p\in \cbb[x]$ is a nonzero polynomial, then
$\lim_{n\to\infty} |p(n)|^{1/n}=1$.
   \end{lem}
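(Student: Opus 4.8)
The plan is to reduce the statement to the elementary limit $n^{1/n}\to 1$ by isolating the leading term of $p$ and passing to logarithms. Write $p(x)=\sum_{j=0}^d a_j x^j$ with $d=\deg p$ and $a_d\neq 0$. Since a nonzero polynomial has only finitely many roots, $p(n)\neq 0$ for all sufficiently large $n$, so $|p(n)|^{1/n}$ is well defined for such $n$ and it suffices to analyze the limit along these $n$. First I would dispose of the constant case $d=0$: then $p(n)=a_0\neq 0$ for every $n$, and $|a_0|^{1/n}\to 1$ because $t^{1/n}\to 1$ for any fixed $t\in(0,\infty)$.

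For $d\Ge 1$ I would factor out the leading monomial: for all large $n$ one has
\[
|p(n)| = |a_d|\, n^d \left| 1 + \sum_{j=0}^{d-1} \frac{a_j}{a_d}\, n^{j-d} \right|,
\]
where the bracketed factor tends to $1$ as $n\to\infty$. Taking logarithms then gives
\[
\frac{1}{n} \ln |p(n)| = \frac{\ln |a_d|}{n} + d\,\frac{\ln n}{n} + \frac{1}{n}\ln\left| 1 + \sum_{j=0}^{d-1} \frac{a_j}{a_d}\, n^{j-d} \right|,
\]
and each of the three terms on the right converges to $0$ (using $\frac{\ln n}{n}\to 0$ for the middle one). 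Exponentiating yields $|p(n)|^{1/n}\to 1$, which is the assertion.

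As a technically even cheaper alternative, one may avoid logarithms and use a squeeze: fix constants $0<c\Le C$ with $c\, n^d \Le |p(n)| \Le C\, n^d$ for all large $n$, whence $c^{1/n}\,(n^{1/n})^d \Le |p(n)|^{1/n} \Le C^{1/n}\,(n^{1/n})^d$, and both outer bounds tend to $1$ by $n^{1/n}\to 1$. There is no genuine obstacle in this lemma; the only point requiring a word of care is the well-definedness of $|p(n)|^{1/n}$, which is guaranteed once $n$ exceeds the largest modulus of a root of $p$.
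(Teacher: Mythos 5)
Your proof is correct; the paper explicitly leaves the proof of this lemma to the reader, so there is no argument in the text to compare against. Your route---factoring out the leading monomial $|a_d|n^d$, handling the constant case separately, and reducing to $n^{1/n}\to 1$ either via logarithms or via the squeeze $c\,n^d \Le |p(n)| \Le C\,n^d$ for large $n$---is exactly the standard argument the authors intend, and your remark on well-definedness past the largest root modulus is the only point of care.
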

%   \begin{proof}
%Set $k=\deg{p} \in \zbb_+$. Let $\lambda$ be the
%leading coefficient of $p$. Then clearly
%$\lim_{n\to\infty} \frac{|p(n)|}{n^{k}}=|\lambda| > 0$
%and so $\lim_{n\to\infty}
%\big(\frac{|p(n)|}{n^{k}}\big)^{1/n}=1$, which gives
%   \begin{align*}
%\lim_{n\to\infty} |p(n)|^{1/n} = \lim_{n\to\infty}
%(n^{1/n})^k \bigg(\frac{|p(n)|}{n^{k}}\bigg)^{1/n}=1.
%   \end{align*}
%   \end{proof}
   \begin{pro} \label{Bang1}
Suppose $\mscr\neq \{0\}$ and $N\in \lino{\mscr}$ is a
nilpotent operator. Then the following statements are
valid{\em :}
   \begin{enumerate}
   \item[(i)] if  $V\in \lino{\mscr}$ is an isometry such that
$VN=NV$, then $V+N$ is a strict $(2 \nil{N}-1)$-isometry,
   \item[(ii)] if $z\in \cbb$ is such that
$\|(zI+N)^n h\|^2$ is a polynomial in $n$ for every
$h\in \mscr$, then $|z|=1$,
   \item[(iii)] if $z\in \cbb$ and
$zI+N$ is an $m$-isometry, then~$|z|=1$.
   \end{enumerate}
   \end{pro}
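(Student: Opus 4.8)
The plan is to treat the three assertions in a way that exploits what has already been established, handling (i) by specializing Theorem~\ref{Bang1-m}, then reducing (iii) to (ii) trivially (an $m$-isometry automatically makes every $\|T^n h\|^2$ a polynomial in $n$), and finally attacking (ii), which is the genuine content, by a spectral-radius/growth-rate argument built on Lemma~\ref{lim-1n}.

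For (i), I would apply Theorem~\ref{Bang1-m} with $A=V$ and $m=1$ (an isometry is a $1$-isometry), so that $m_N = 1 + 2(\nil{N}-1) = 2\nil{N}-1$ and part~(i) of that theorem immediately yields that $V+N$ is a $(2\nil{N}-1)$-isometry. To upgrade this to \emph{strictness}, I would verify the criterion of Theorem~\ref{Bang1-m}(ii): with $m=1$ the sum there collapses to the single term $\|N^{\nil{N}-1}f_0\|^2$, and by the very definition of the index of nilpotency there exists $f_0\in\mscr$ with $N^{\nil{N}-1}f_0\neq 0$, so this quantity is nonzero. Hence $V+N$ is a strict $(2\nil{N}-1)$-isometry.

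For (iii), since $zI+N$ being an $m$-isometry forces $\|(zI+N)^n h\|^2$ to be a polynomial in $n$ (of degree at most $m-1$) for every $h\in\mscr$, the hypothesis of (ii) is satisfied and the conclusion $|z|=1$ follows at once. So everything reduces to (ii).

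The crux is (ii), and the main obstacle is that $N$ need not be bounded, so I want an argument that uses only the algebraic/polynomial structure rather than operator norms. The key observation is that for a fixed $h$ with $N^{\nil{N}-1}h \ne 0$ (such $h$ exists since $\mscr\neq\{0\}$ and $N$ has a well-defined index), I can expand $(zI+N)^n h = \sum_{i=0}^{\kappa_n} \binom{n}{i} z^{n-i} N^i h$ via Newton's binomial formula, where the sum truncates at $i=\nil{N}-1$. The idea is to isolate the growth in $n$: the dominant contribution to $\|(zI+N)^n h\|^2$ comes from the top nilpotency term, giving a leading behavior of order $|z|^{2(n-(\nil{N}-1))} \binom{n}{\nil{N}-1}^2 \|N^{\nil{N}-1}h\|^2$. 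By hypothesis $\|(zI+N)^n h\|^2 = p(n)$ for some nonzero polynomial $p$, and Lemma~\ref{lim-1n} gives $\lim_{n\to\infty} p(n)^{1/n} = 1$. On the other hand, taking $n$-th roots of the leading term shows $\lim_{n\to\infty} \big(\|(zI+N)^n h\|^2\big)^{1/n} = |z|^2$, since the binomial coefficient contributes a subexponential factor that disappears under the $n$-th root. Comparing the two limits forces $|z|^2 = 1$, i.e. $|z|=1$. The delicate point I expect to handle carefully is justifying that the top term genuinely dominates—that cross terms and lower-order terms do not conspire to cancel the leading growth—which I would secure either by grouping terms according to their power of $|z|$ and using that the top term has a strictly nonvanishing coefficient $\|N^{\nil{N}-1}h\|^2$, or, more cleanly, by noting that $\|(zI+N)^n h\|^2$ is a polynomial in $n$ with possible extra factors $|z|^{2n}$, so that $|z|\neq 1$ would make the $n$-th root converge to $|z|^2\neq 1$, contradicting Lemma~\ref{lim-1n}.
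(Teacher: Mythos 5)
Your proposal follows essentially the same route as the paper: (i) is obtained by specializing Theorem~\ref{Bang1-m} to $A=V$ and $m=1$ (with strictness from the collapsed sum $\|N^{\nil{N}-1}f_0\|^2\neq 0$), (iii) reduces to (ii) because $m$-isometries have $\|T^nh\|^2$ polynomial in $n$, and (ii) is proved exactly as in the paper by expanding $(zI+N)^nh=\sum_{j=0}^{\nil{N}-1}\tbinom{n}{j}z^{n-j}N^jh$, factoring out $|z|^{2n}$ against a nonvanishing polynomial factor, and invoking $\lim_{n}p(n)^{1/n}=1$ from Lemma~\ref{lim-1n}. The one detail the paper treats separately and you gloss over is the case $z=0$, where the factorization by $z^{-j}$ is unavailable; it is disposed of in one line since then $\|N^nh\|^2=0$ for $n\ge\nil{N}$ while it equals $\|h\|^2\neq 0$ at $n=0$, so it cannot agree with a polynomial.
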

   \begin{proof}
In view of Proposition~\ref{polar-1} and
Theorem~\ref{Bang1-m}, it suffices to prove (ii). Under the
assumptions of (ii), we argue as follows. Set $k=\nil{N}$.
Let $h_0\in \mscr$ be such that $N^{k-1}h_0\neq 0$ and let
$p\in \cbb[x]$ be a polynomial such that
   \begin{align*}
p(n)=\|(zI+N)^n h_0\|^2, \quad n\in \zbb_+.
   \end{align*}
Clearly $p\neq 0$. Consider first the case $z=0$. Then
$p(n)=\|N^n h_0\|^2=0$ for all $n\Ge k$, which means
that the polynomial $p$ has infinitely many roots.
Hence $p=0$, a contradiction. Suppose now that $z\neq
0$. Using Newton's binomial formula, we obtain
   \begin{align} \label{slon-2}
p(n) = \bigg\|\sum_{j=0}^{k-1} \frac{(n)_j}{j!}
z^{n-j} N^j h_0\bigg\|^2 = |z|^{2n}
\bigg\|\sum_{j=0}^{k-1} \frac{(n)_j}{j!z^j} N^j
h_0\bigg\|^2, \quad n\in \zbb_+.
   \end{align}
Notice that $\big\|\sum_{j=0}^{k-1}
\frac{(n)_j}{j!z^j} N^j h_0\big\|^2$ is a polynomial
in $n$ which is nonzero (because $p(0)\neq 0$).
Combined with Lemma~\ref{lim-1n}, this yields
   \begin{align*}
1=\lim_{n\to\infty} p(n)^{1/n}
\overset{\eqref{slon-2}}= |z|^2,
   \end{align*}
which completes the proof.
   \end{proof}
It is worth noting that the proof of
Proposition~\ref{Bang1}(iii) can be made much shorter
if we assume additionally that $\mscr=\hh$ and
$N\in\ogr{\hh}$. Indeed, then by the spectral mapping
theorem, $r(zI+N)=|z|$. Therefore, if $zI+N$ is an
$m$-isometry, then by \eqref{as-spec}, $|z|=1$.
   \section{\label{Sec5}Orthogonality of generalized eigenvectors}
Theorem~\ref{alg-m-iso-lem} below provides a few
sufficient conditions for orthogonality of generalized
eigenvectors corresponding to distinct eigenvalues.
First we need the following lemma of algebraic nature
whose proof depends heavily on the Carlson theorem
recorded below.
    \begin{thm}[\mbox{\cite{Car},\cite[Theorem 1]{Rubel}}] \label{Ca-Ru}
Let $\phi$ be an entire function on $\cbb$ such that
$\sup_{z \in \cbb} |\phi(z)|\E^{-\tau |z|} < \infty$
for some $\tau\in \rbb$ and $\sup_{y \in \rbb}
|\phi(\I y)|\E^{-\eta |y|} < \infty$ for some $\eta\in
(-\infty,\pi)$. If $\phi(n)=0$ for every $n\in\nbb$,
then $\phi(z)=0$ for all $z\in \cbb$.
    \end{thm}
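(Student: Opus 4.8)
The plan is to reduce the global vanishing of $\phi$ to a half-plane statement and to exploit the zeros at the positive integers through an auxiliary quotient. Since $\phi$ is of exponential type, its Phragm\'en--Lindel\"of indicator
\[
h(\theta) = \limsup_{r\to\infty} \frac{\log|\phi(r\E^{\I\theta})|}{r}, \qquad \theta\in(-\pi,\pi],
\]
is finite, bounded above by $\tau$, and trigonometrically convex; the second hypothesis yields $h(\pm\tfrac{\pi}{2})\Le\eta<\pi$. By the identity theorem it suffices to prove that $\phi$ vanishes on the closed right half-plane $\overline{\Pi}$, where $\Pi=\{z\in\cbb\colon \re z>0\}$.

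First I would pass to $g(z)=\phi(z)/\sin(\pi z)$. In $\Pi$ the only zeros of $\sin(\pi z)$ are the positive integers, all simple, and there $\phi$ vanishes too; hence these singularities are removable and $g$ is holomorphic on $\Pi$. On the imaginary axis $|\sin(\pi\I y)|=\sinh(\pi|y|)$, so that
\[
|g(\I y)| = \frac{|\phi(\I y)|}{\sinh(\pi|y|)} \Le C\,\E^{(\eta-\pi)|y|}, \qquad |y|\Ge 1,
\]
and, since $\eta-\pi<0$, we get $|g(\I y)|\to 0$ as $|y|\to\infty$. Thus $g$ decays exponentially along $\partial\Pi$, and the whole problem is reduced to upgrading this boundary decay to the conclusion $g\equiv0$ on $\Pi$ (whence $\phi\equiv0$).

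The main obstacle, and the delicate heart of the theorem, is precisely this passage from boundary decay to global vanishing: along the positive real axis $\phi$, and hence $g$, may a priori grow like $\E^{\tau x}$, so a crude maximum-modulus bound in $\Pi$ is unavailable. This is exactly where the \emph{strict} inequality $\eta<\pi$ is indispensable --- the function $\phi(z)=\sin(\pi z)$ satisfies every hypothesis except this one (it has $\eta=\pi$) and is not identically zero, so the bound is sharp. I would close the argument through the indicator: trigonometric convexity on $[-\tfrac{\pi}{2},\tfrac{\pi}{2}]$ forces $h$ to be sinusoidal on each side of $0$, and the Levin relation between the jump $h'(0^{+})-h'(0^{-})$ and the linear density of the zeros on the positive real axis bounds that density by $\tfrac{1}{2\pi}\big(h(\tfrac{\pi}{2})+h(-\tfrac{\pi}{2})\big)\Le \eta/\pi<1$; since the positive integers have density exactly $1$, this is impossible unless $\phi\equiv0$. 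Equivalently, one verifies that $g$ belongs to the Nevanlinna class of $\Pi$, so that $\int_{\rbb}\frac{\log|g(\I y)|}{1+y^2}\,\D y>-\infty$ whenever $g\not\equiv0$, which the exponential decay $\log|g(\I y)|\Le(\eta-\pi)|y|+C$ contradicts. Establishing this density--indicator relation (respectively the Nevanlinna membership), where the singularities of $1/\sin(\pi z)$ near the real axis must be controlled, is the step I expect to demand the most care, for it is there that all of the hypotheses are consumed at once.
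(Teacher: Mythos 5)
The paper does not prove this statement at all: Theorem~\ref{Ca-Ru} is quoted as a known classical result, with the proof delegated to Carlson's thesis \cite{Car} and to Rubel \cite[Theorem 1]{Rubel}. So there is no in-paper argument to compare yours with; what can be assessed is whether your sketch would stand on its own. Its skeleton is the standard classical one: reduce to the right half-plane, divide by $\sin(\pi z)$ so that the hypotheses $\phi(n)=0$ make the quotient $g$ holomorphic there, observe that $|g(\I y)|\Le C\E^{(\eta-\pi)|y|}$ decays because $\eta<\pi$, and then derive a contradiction either from Carleman's formula (the density of zeros of $\phi$ on the positive real axis cannot exceed $\frac{1}{2\pi}\bigl(h(\tfrac{\pi}{2})+h(-\tfrac{\pi}{2})\bigr)\Le\eta/\pi<1$, yet the positive integers have density $1$) or from the log-integral $\int_{\rbb}\log|g(\I y)|(1+y^2)^{-1}\,\D y>-\infty$ valid for nonzero functions of bounded type in a half-plane. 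Both routes are legitimate and both appear in the literature (the first is essentially the proof in Boas's \emph{Entire Functions}); your remark that $\phi(z)=\sin(\pi z)$ shows the constant $\pi$ is sharp is also correct and well placed.

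That said, as written this is a plan rather than a proof, and you say so yourself: the decisive quantitative step --- Carleman's formula relating the real zeros to the indicator at $\pm\tfrac{\pi}{2}$, or alternatively the verification that $g$ lies in the Nevanlinna class of the half-plane --- is named but not established, and that is where essentially all of the analytic content of Carlson's theorem resides. In particular you must still control $g$ near the positive real axis, where $1/\sin(\pi z)$ has poles between consecutive integers; the usual device is a maximum-modulus argument on circles $|z-n|=\tfrac14$ (or on the lines $\re z=n+\tfrac12$) to show that $g$ is still of exponential type up to the boundary ray $\arg z=0$, without which neither the indicator of $g$ nor its Nevanlinna membership is available. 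Two smaller points: trigonometric convexity does not ``force $h$ to be sinusoidal on each side of $0$'' --- it only yields the inequality $0\Le\bigl(h(\tfrac{\pi}{2})+h(-\tfrac{\pi}{2})\bigr)\cos\theta$ when applied to the endpoints $\pm\tfrac{\pi}{2}$, which is in fact all you need; and the hypothesis of the theorem only gives $h(\pm\tfrac{\pi}{2})\Le\eta$ with $\eta$ possibly positive, so the contradiction must be drawn from $\eta<\pi$ after the division by $\sin(\pi z)$, exactly as you set it up. In short: right road map, standard and correct in outline, but the theorem is not yet proved until the Carleman (or bounded-type) step is actually carried out.
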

   \begin{lem} \label{re-polyn}
Suppose $p,q \in \cbb[x]$ and $\alpha \in \tbb \setminus
\{1\}$ are such that
   \begin{align} \label{penre}
p(n) = \mathrm{Re} (\alpha^n q(n)), \quad n \in
\zbb_+.
   \end{align}
Then $\mathrm{Re}\, q = 0$ if $\alpha=-1$, and $q=0$
otherwise.
   \end{lem}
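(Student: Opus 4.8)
The plan is to recast \eqref{penre} as the assertion that a single entire function of exponential type vanishes on the nonnegative integers, and then to invoke Carlson's theorem (Theorem~\ref{Ca-Ru}); the value $\alpha=-1$ is special precisely because it lies on the sharp boundary of Carlson's hypothesis, so it will be handled by an elementary argument. First I would rewrite the right-hand side of \eqref{penre}. Since $2\,\mathrm{Re}\, w = w + \bar w$, since $\bar\alpha = \alpha^{-1}$ (because $|\alpha|=1$), and since $\overline{q(n)} = q^*(n)$ for $n\in\zbb_+$, the relation \eqref{penre} is equivalent to
   \begin{align*}
2 p(n) = \alpha^n q(n) + \alpha^{-n} q^*(n), \quad n \in \zbb_+.
   \end{align*}
Writing $\alpha = \E^{\I\theta}$ with $\theta \in (-\pi,\pi]\setminus\{0\}$, I would introduce the entire function
   \begin{align*}
\phi(z) = 2 p(z) - \E^{\I\theta z} q(z) - \E^{-\I\theta z} q^*(z), \quad z \in \cbb,
   \end{align*}
so that the displayed identity says exactly that $\phi(n)=0$ for every $n\in\zbb_+$, hence for every $n\in\nbb$.

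Next I would verify the two growth bounds of Theorem~\ref{Ca-Ru}. Because $|\E^{\pm\I\theta z}| = \E^{\mp\theta\,\mathrm{Im}\,z} \Le \E^{|\theta|\,|z|}$ and polynomials grow subexponentially, $\phi$ is of exponential type, so the first hypothesis holds with, say, $\tau = |\theta|+1$. On the imaginary axis $z=\I y$ one has $|\E^{\pm\I\theta z}| = \E^{\mp\theta y} \Le \E^{|\theta|\,|y|}$, whence $|\phi(\I y)| \Le C(1+|y|)^{N}\E^{|\theta|\,|y|}$ for suitable constants $C,N$. Thus the second hypothesis, $\sup_{y}|\phi(\I y)|\E^{-\eta|y|}<\infty$ with $\eta<\pi$, holds as soon as one can choose $\eta$ with $|\theta|<\eta<\pi$, i.e.\ precisely when $|\theta|<\pi$, which is the case $\alpha\neq-1$. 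For such $\alpha$, Carlson's theorem gives $\phi\equiv 0$, that is,
   \begin{align*}
2 p(z) = \E^{\I\theta z} q(z) + \E^{-\I\theta z} q^*(z), \quad z \in \cbb.
   \end{align*}
To deduce $q=0$ I would restrict to $z=\I y$ and let $y\to+\infty$: since $\theta\neq 0$, exactly one of $\E^{\mp\theta y}$ grows exponentially while the other decays, and since $2p(\I y)$ grows only polynomially, the polynomial factor ($q^*$ or $q$) multiplying the exponentially growing term is forced to vanish; as $q^*=0$ iff $q=0$, this yields $q=0$ (and then $p=0$).

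Finally, for $\alpha=-1$ Carlson is unavailable, since here $|\theta|=\pi$ is the exact borderline at which the conclusion can genuinely fail (witness $\sin(\pi z)$), so I would argue by hand. Writing $q=\mathrm{Re}\,q+\I\,\mathrm{Im}\,q$ and using $\mathrm{Re}\,(q(n))=(\mathrm{Re}\,q)(n)$ for $n\in\zbb_+$, the relation \eqref{penre} becomes $p(n) = (-1)^n (\mathrm{Re}\,q)(n)$. Splitting into even and odd $n$ shows that the polynomial $p-\mathrm{Re}\,q$ vanishes at all even nonnegative integers while $p+\mathrm{Re}\,q$ vanishes at all odd ones; hence $p=\mathrm{Re}\,q$ and $p=-\mathrm{Re}\,q$, forcing $\mathrm{Re}\,q=0$, as required. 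I expect the main obstacle to be the imaginary-axis estimate with $\eta<\pi$: it is exactly this bound that makes Carlson applicable when $\alpha\neq-1$ and that breaks down—by a real obstruction, not a defect of the method—at $\alpha=-1$, which is why the two cases yield different conclusions.
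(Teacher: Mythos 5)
Your proof is correct and takes essentially the same route as the paper's: an elementary even/odd splitting of $\zbb_+$ for $\alpha=-1$, and for $\alpha\neq-1$ an application of Carlson's theorem to an entire function of exponential type built from \eqref{penre}, followed by an asymptotic comparison of the two exponentials along the imaginary axis to kill $q$. The only (cosmetic) difference is that you keep the identity in the form $2p(z)=\E^{\I\theta z}q(z)+\E^{-\I\theta z}q^{*}(z)$ with $\theta\in(-\pi,\pi)$ instead of splitting into $\cos(\vartheta z)(\mathrm{Re}\,q)(z)-\sin(\vartheta z)(\mathrm{Im}\,q)(z)$, which lets you absorb the paper's third case ($\vartheta\in(\pi,2\pi)$, handled there by conjugation) into the main argument.
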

   \begin{proof}
We consider three possible cases that are logically
disjoint.

{\sc Case 1.} $\alpha=-1$.

It follows from \eqref{penre} that
   \begin{align*}
p(2n) = (\mathrm{Re} \, q)(2n), \quad n\in \zbb_+.
   \end{align*}
Then, by the Fundamental Theorem of Algebra, we have
   \begin{align} \label{penre2}
p (n) = (\mathrm{Re} \, q)(n), \quad n\in \zbb_+.
   \end{align}
As a consequence, we obtain
   \begin{align*}
(\mathrm{Re} \, q)(2n+1) \overset{\eqref{penre2}}=
p(2n+1) \overset{\eqref{penre}}= - (\mathrm{Re} \,
q)(2n+1), \quad n\in \zbb_+.
   \end{align*}
Thus, by the Fundamental Theorem of Algebra again,
$\mathrm{Re} \, q = 0$.

{\sc Case 2.} $\alpha=\E^{\I \vartheta}$ for some
$\vartheta \in (0,\pi)$.

Observe that
   \begin{align} \notag
p(n) & \overset{\eqref{penre}}= \mathrm{Re} \, (\E^{\I
n \vartheta}) \mathrm{Re} \,(q(n)) - \mathrm{Im} \,
(\E^{\I n \vartheta}) \mathrm{Im} \,(q(n))
   \\ \label{penre3}
&\hspace{1ex} =\cos(n \vartheta) (\mathrm{Re} \,q)(n)
- \sin (n \vartheta) (\mathrm{Im} \,q)(n), \quad n\in
\zbb_+.
   \end{align}
Define the entire function $\phi$ on $\cbb$ by
   \begin{align}  \label{penre2.5}
\phi(z) = p(z) - \cos(\vartheta z) (\mathrm{Re}
\,q)(z) + \sin (\vartheta z) (\mathrm{Im} \,q)(z),
\quad z\in \cbb.
   \end{align}
It follows from \eqref{penre3} that
   \begin{align} \label{penre4}
\phi(n) = 0, \quad n\in \zbb_+.
   \end{align}
Now we show that for any $\epsilon \in (0,\infty)$,
there exists $c_{\epsilon} \in (0,\infty)$ such that
   \begin{align} \label{penre5}
|\phi(z)| \Le c_{\epsilon} \E^{(\vartheta + \epsilon)
|z|}, \quad z \in \cbb.
   \end{align}
Indeed, if $r \in \cbb[x]$, then for every $\epsilon
\in (0,\infty)$ there exists $d_{\epsilon} \in
(0,\infty)$ such that $|r(z)| \Le d_{\epsilon}
\E^{\epsilon |z|}$ for all $z\in \cbb$. It is easily
seen that $|\cos(z\vartheta)| \Le \E^{\vartheta |z|}$
and $|\sin(z\vartheta)| \Le \E^{\vartheta |z|}$ for
all $z\in \cbb$. Putting this all together yields
\eqref{penre5}.

By \eqref{penre5}, $\phi$ is of exponential type
(i.e., $|\phi(z)| \Le c \E^{\tau |z|}$ for all $z \in
\cbb$ and for some $c,\tau\in \rbb$) and there exist
$d\in (0,\infty)$ and $\eta \in (0,\pi)$ such~that
   \begin{align*}
|\phi(\I y)| \Le d\E^{\eta |y|}, \quad y\in \rbb.
   \end{align*}
These two facts and \eqref{penre4} imply that the
entire function $\phi$ satisfies the assumptions of
Theorem \ref{Ca-Ru}. Therefore by this theorem
$\phi=0$. Combined with \eqref{penre2.5}, this yields
   \begin{align} \label{penre6}
p(z) = \cos(\vartheta z) (\mathrm{Re} \,q)(z) - \sin
(\vartheta z) (\mathrm{Im} \,q)(z), \quad z\in \cbb.
   \end{align}
Substituting $z=\I y$ with $y\in \rbb$ into
\eqref{penre6}, we obtain
   \begin{align} \label{penre7}
p(\I y) = u_1(y) \E^{-\vartheta y} + u_2(y)
\E^{\vartheta y}, \quad y\in \rbb,
   \end{align}
where $u_1, u_2 \in \cbb[x]$ are polynomials given by
   \begin{align}   \label{penre7.5}
2 u_1 = (\mathrm{Re} \,q)(\I x) + \I (\mathrm{Im}
\,q)(\I x) \quad \text{and} \quad 2 u_2 = (\mathrm{Re}
\,q)(\I x) - \I (\mathrm{Im} \,q)(\I x).
   \end{align}
It follows from \eqref{penre7} that
   \begin{align} \label{penre8}
u_2(y)=\frac{p(\I y)}{\E^{\vartheta y}} -
\frac{u_1(y)}{\E^{2 \vartheta y}}, \quad y\in \rbb.
   \end{align}
Using the fact that $p, u_1, u_2 \in \cbb[x]$ and passing
to the limit as $y\to \infty$ in \eqref{penre8}, we deduce
that $u_2=0$. A similar argument shows that $u_1=0$.
Combined with \eqref{penre7.5}, this implies that
$\mathrm{Re} \,q = 0$ and $\mathrm{Im} \,q = 0$. Hence
$q=0$.

{\sc Case 3.} $\alpha=\E^{\I \vartheta}$ for some
$\vartheta \in (\pi,2\pi)$.

In view of \eqref{penre} we have
   \begin{align*}
p(n) = \mathrm{Re} (\bar \alpha^n q^*(n)), \quad n \in
\zbb_+.
   \end{align*}
Since $\bar \alpha = \E^{\I (2\pi -\vartheta)}$ and
$2\pi -\vartheta \in (0,\pi)$, we can apply Case 2. As
a consequence, we obtain $q^*=0$. Thus $q=0$, which
completes the proof.
   \end{proof}
For the reader's convenience, we recall necessary
terminology related to generalized eigenvectors. Given
$T\in\lino{\mscr}$ and $z\in \cbb$, we set
   \begin{align*}
\gev{T}{z}=\bigcup_{n\in \nbb} \ker((T-z I)^{n}).
   \end{align*}
Since $\ker((T-z I)^{n}) \subseteq \ker((T-z
I)^{n+1})$ for all $n\in \nbb$, we deduce that
$\gev{T}{z}$ is a vector subspace of $\mscr$ which is
invariant for $T$. Notice that $\gev{T}{z} \neq \{0\}$
if and only if $z$ is an eigenvalue of $T$. If $z$ is
an eigenvalue of $T$, then a nonzero element of
$\gev{T}{z}$ is called a {\em generalized eigenvector}
of $T$ corresponding to $z$, and the vector space
$\gev{T}{z}$ itself is called the {\em generalized
eigenspace} (or {\em spectral space}) of $T$
corresponding to $z$.

We are now ready to prove the aforesaid criterions for
orthogonality of generalized eigenvectors. Recall that
$\ubb=\{-1,1\} \times \{-\I,\I\}$.
   \begin{thm} \label{alg-m-iso-lem}
Let $z_1$ and $z_2$ be two distinct elements of
$\tbb$. Suppose $T\in\lino{\mscr}$ and $h_j \in
\gev{T}{z_j}$ for $j=1,2$. Then the following
assertions hold{\em :}
   \begin{enumerate}
   \item[(i)] $\mathrm{Re}\is{T^nh_1}{T^nh_2}=0$ for all
$n\in \zbb_+$ provided $z_1=-z_2$ and $\|T^n(h_1 +
h_2)\|^2$ is a polynomial in $n$,
   \item[(ii)] $\is{T^nh_1}{T^nh_2}=0$
for all $n\in \zbb_+$ provided $z_1=-z_2$ and there is
$(\epsilon_1,\epsilon_2) \in\ubb$ such that
$\|T^n(\epsilon_k h_1 + h_2)\|^2$ is a polynomial in
$n$ for $k=1,2$,
   \item[(iii)] $\is{T^nh_1}{T^nh_2}=0$
for all $n\in \zbb_+$ provided $z_1\neq -z_2$ and
$\|T^n(h_1 + h_2)\|^2$ is a polynomial in $n$.
   \end{enumerate}
   \end{thm}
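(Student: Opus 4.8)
The plan is to reduce all three assertions to the single algebraic fact contained in Lemma~\ref{re-polyn}; the whole point is to put $\is{T^n h_1}{T^n h_2}$ into the exact shape $\alpha^n q(n)$ with $\alpha \in \tbb \setminus \{1\}$ and $q\in\cbb[x]$ that the lemma is built to handle. Since $h_j \in \gev{T}{z_j}$, there is $k_j \in \nbb$ with $(T - z_j I)^{k_j} h_j = 0$, and expanding $T^n = ((T - z_j I) + z_j I)^n$ by Newton's binomial formula (exactly as in the proof of Proposition~\ref{Bang1}) gives
   \begin{align*}
T^n h_j = z_j^n v_j(n), \qquad v_j(n) = \sum_{i=0}^{k_j - 1} \frac{(n)_i}{i!\, z_j^i} (T - z_j I)^i h_j,
   \end{align*}
so that each $v_j$ is an $\mscr$-valued polynomial in $n$.

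Second, I would read off the scalar consequences. Because $|z_j| = 1$, the norms $\|T^n h_j\|^2 = \is{v_j(n)}{v_j(n)}$ are polynomials in $n$, while
   \begin{align*}
\is{T^n h_1}{T^n h_2} = (z_1 \overline{z_2})^n \is{v_1(n)}{v_2(n)} = \alpha^n q(n),
   \end{align*}
where $\alpha := z_1 \overline{z_2} \in \tbb$ and $q := \is{v_1(\cdot)}{v_2(\cdot)} \in \cbb[x]$. Since $z_1 \neq z_2$ lie on $\tbb$, we have $\alpha \neq 1$, and moreover $\alpha = -1$ precisely when $z_1 = -z_2$. For any unimodular $\epsilon$ the expansion
   \begin{align*}
\|T^n(\epsilon h_1 + h_2)\|^2 = \|T^n h_1\|^2 + \|T^n h_2\|^2 + 2\,\mathrm{Re}\big(\alpha^n (\epsilon q)(n)\big)
   \end{align*}
then shows that whenever the left-hand side is a polynomial in $n$, so is $\mathrm{Re}(\alpha^n(\epsilon q)(n))$, as the first two terms on the right are polynomials and $\epsilon q \in \cbb[x]$.

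Third, each assertion follows by invoking Lemma~\ref{re-polyn} with the appropriate $\alpha$ and $\epsilon$. For (iii), $z_1 \neq -z_2$ gives $\alpha \neq -1$; taking $\epsilon = 1$ and applying the lemma yields $q = 0$, hence $\is{T^n h_1}{T^n h_2} = 0$. For (i), $z_1 = -z_2$ gives $\alpha = -1$; with $\epsilon = 1$ the lemma gives $\mathrm{Re}\, q = 0$, so $\mathrm{Re}\is{T^n h_1}{T^n h_2} = (-1)^n (\mathrm{Re}\, q)(n) = 0$. For (ii), again $\alpha = -1$, and I would apply the preceding step with $\epsilon = \epsilon_1 \in \{-1,1\}$ and $\epsilon = \epsilon_2 \in \{-\I, \I\}$ to obtain $\mathrm{Re}(\epsilon_1 q) = 0$ and $\mathrm{Re}(\epsilon_2 q) = 0$; since $\epsilon_1 = \pm 1$ this forces $\mathrm{Re}\, q = 0$, and since $\epsilon_2 = \pm\I$ together with $\mathrm{Re}(\I q) = -\,\mathrm{Im}\, q$ it forces $\mathrm{Im}\, q = 0$, whence $q = 0$ and $\is{T^n h_1}{T^n h_2} = 0$.

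The bookkeeping above is routine; the genuine difficulty is entirely front-loaded into Lemma~\ref{re-polyn}, that is, into the appeal to Carlson's theorem (Theorem~\ref{Ca-Ru}) which upgrades the pointwise identity $\mathrm{Re}(\alpha^n q(n)) = p(n)$ on $\zbb_+$ to the vanishing of the relevant part of $q$. The one place needing care is part (ii): a single translate $h_1 + h_2$ controls only $\mathrm{Re}\, q$, so one must exploit the two independent choices afforded by $\ubb$ — one real-type $\epsilon_1$ and one imaginary-type $\epsilon_2$ — to recover $\mathrm{Re}\, q$ and $\mathrm{Im}\, q$ separately and thereby pin down $q$ itself.
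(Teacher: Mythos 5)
Your proposal is correct and follows essentially the same route as the paper: the identity $T^nh_j=z_j^n v_j(n)$ with $v_j(n)=(I+\bar z_j N_j)^n h_j$ a vector-valued polynomial, the resulting representation $\is{T^nh_1}{T^nh_2}=\alpha^n q(n)$ with $\alpha=z_1\bar z_2\in\tbb\setminus\{1\}$, and the reduction of all three assertions to Lemma~\ref{re-polyn}. The only cosmetic difference is that in (ii) the paper invokes assertion (i) for the pairs $(h_1,h_2)$ and $(\I h_1,h_2)$, whereas you work directly with $\mathrm{Re}(\epsilon_1 q)=\mathrm{Re}(\epsilon_2 q)=0$; these are the same computation.
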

   \begin{proof}  By assumption there exit $k_1,k_2 \in \nbb$ such that
$h_j \in \ker((T - z_j I)^{k_j})$ for $j=1,2$. Before
justifying the assertions (i)-(iii), we discuss the general
case when $\|T^n(h_1 + h_2)\|^2$ is a polynomial in $n$.
Let $r\in \cbb[x]$ be a polynomial such that
$r(n)=\|T^n(h_1 + h_2)\|^2$ for all $n\in \zbb_+$. Set
$N_j= T - z_j I$ for $j=1,2$. Then
   \begin{align} \notag
r(n) & = \|(I+ \bar z_1 N_1)^n h_1\|^2 + \|(I+ \bar
z_2 N_2)^n h_2\|^2
   \\ \label{pietrz1}
&\hspace{6.5ex}+ 2 \mathrm{Re} (\alpha^n \is{(I + \bar
z_1 N_1)^n h_1}{(I + \bar z_2 N_2)^n h_2}), \quad n\in
\zbb_+,
   \end{align}
where $\alpha:=z_1 \bar z_2$. Since $z_1, z_2 \in
\tbb$ and $z_1 \neq z_2$, we see that $\alpha \in
\tbb\setminus \{1\}$. By assumption and Newton's
binomial formula, we have
   \begin{align}  \label{pietrz3}
(I+ \bar z_j N_j)^n h_j = \sum_{l=0}^n \binom{n}{l}
\bar z_j^l N_j^l h_j = \sum_{l=0}^{k_j-1}
\frac{(n)_l}{l!} \bar z_j^l N_j^l h_j, \quad j=1,2.
   \end{align}
Since for any $l\in \zbb_+$, $(n)_l$ is a polynomial in
$n$, \eqref{pietrz3} implies that $\|(I + \bar z_1 N_1)^n
h_1\|^2$, $\|(I + \bar z_2 N_2)^n h_2\|^2$ and $\is{(I+
\bar z_1 N_1)^n h_1}{(I+ \bar z_2 N_2)^n h_2}$ are
polynomials in $n$. It follows form \eqref{pietrz1} that
there exist polynomials $p, q \in \cbb[x]$ such that
   \begin{align} \label{pietrz2}
p(n) = \mathrm{Re}(\alpha^n q(n)), \quad n\in \zbb_+,
   \end{align}
   where
   \begin{align*}
q(n) = \is{(I+ \bar z_1 N_1)^n h_1}{(I+ \bar z_2
N_2)^n h_2}, \quad n\in \zbb_+.
   \end{align*}
The above discussion also gives the following identity
   \begin{align} \label{asu-1}
\is{T^n h_1}{T^n h_2} = \alpha^n q(n), \quad n\in
\zbb_+.
   \end{align}

(i) If $z_1=-z_2$, then $\alpha=-1$ and thus by
\eqref{pietrz2} and Lemma~\ref{re-polyn},
$\mathrm{Re}\, q = 0$, or equivalently by
\eqref{asu-1}, $\mathrm{Re}\is{T^nh_1}{T^nh_2}=0$ for
all $n\in \zbb_+$.

(ii) Assume that $\|T^n(\epsilon h_1 + h_2)\|^2$ is a
polynomial in $n$ for $\epsilon\in \{1,\I\}$ (the
remaining cases can be proved in the same way).
Applying (i) to the pairs $(h_1,h_2)$ and $(\I
h_1,h_2)$, we see that
   \begin{align*}
\mathrm{Re}\is{T^nh_1}{T^nh_2}=0=\mathrm{Im}\is{T^nh_1}{T^nh_2},
\quad n \in \zbb_+,
   \end{align*}
which yields (ii).

(iii) Since now $\alpha \in \tbb\setminus \{-1\}$, the
assertion (iii) is a direct consequence of
\eqref{pietrz2}, \eqref{asu-1} and
Lemma~\ref{re-polyn}. This completes the proof.
   \end{proof}
   \begin{cor} \label{Bang-ch}
Let $z_1$ and $z_2$ be two distinct elements of
$\tbb$. Suppose $T\in\lino{\mscr}$ and $h_j \in
\gev{T}{z_j}$ for $j=1,2$. Consider the following
conditions{\em :}
   \begin{enumerate}
   \item[(i)] $z_1 = -z_2$ and there is
$(\epsilon_1,\epsilon_2) \in \ubb$ such that
$\|T^n(\epsilon_k T^j h_1 + h_2)\|^2$ is a polynomial
in $n$ for $k=1,2$ and every $j\in \zbb_+$,
   \item[(ii)] $z_1 \neq -z_2$ and
$\|T^n(T^j h_1 + h_2)\|^2$ is a polynomial in $n$ for
every $j\in \zbb_+$.
   \end{enumerate}
Then any of the conditions {\em (i)} and {\em (ii)} implies
that $\overline{\cscr_T(h_1)} \perp
\overline{\cscr_T(h_2)}$.
   \end{cor}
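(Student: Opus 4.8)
The plan is to reduce Corollary~\ref{Bang-ch} to Theorem~\ref{alg-m-iso-lem} by leveraging the fact that $\gev{T}{z}$ is invariant for $T$. First I would observe that since $h_1 \in \gev{T}{z_1}$ and this generalized eigenspace is $T$-invariant, every vector of the form $T^j h_1$ again lies in $\gev{T}{z_1}$ for each $j\in\zbb_+$. Thus the hypotheses of the corollary are exactly the hypotheses of Theorem~\ref{alg-m-iso-lem}(ii) (in case (i)) or Theorem~\ref{alg-m-iso-lem}(iii) (in case (ii)), but now applied to the pair $(T^j h_1, h_2)$ rather than $(h_1,h_2)$.

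Concretely, under condition (i) of the corollary, for each fixed $j\in\zbb_+$ the vector $T^j h_1$ belongs to $\gev{T}{z_1}$, and the polynomiality hypothesis guarantees that $\|T^n(\epsilon_k T^j h_1 + h_2)\|^2$ is a polynomial in $n$ for $k=1,2$. Applying Theorem~\ref{alg-m-iso-lem}(ii) to the pair $(T^j h_1, h_2)$ then yields $\is{T^n(T^j h_1)}{T^n h_2}=0$ for all $n\in\zbb_+$; in particular, taking $n=0$ gives $\is{T^j h_1}{h_2}=0$. Running this over all $j\in\zbb_+$ shows that $h_2$ is orthogonal to every spanning vector $T^j h_1$ of $\cscr_T(h_1)$, hence $h_2 \perp \cscr_T(h_1)$. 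The argument under condition (ii) is identical, using Theorem~\ref{alg-m-iso-lem}(iii) in place of (ii).

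Having established $\is{T^j h_1}{h_2}=0$ for all $j$, I would next upgrade this to full orthogonality of the two cyclic subspaces. The cleanest route is to symmetrize: one should also obtain $\is{T^j h_1}{T^i h_2}=0$ for all $i,j\in\zbb_+$. Since $T^i h_2 \in \gev{T}{z_2}$ by invariance, one can rerun Theorem~\ref{alg-m-iso-lem} with the roles arranged so that the second vector is $T^i h_2$; alternatively, fixing $j$ and noting $T^j h_1 \in \gev{T}{z_1}$, apply the theorem to the pair $(T^j h_1, T^i h_2)$, whose polynomiality hypothesis follows from the stated assumptions by the same invariance reasoning. This gives $\is{T^j h_1}{T^i h_2}=0$ for all $i,j\in\zbb_+$, whence $\cscr_T(h_1)\perp\cscr_T(h_2)$ by bilinearity of the inner product over the spanning sets.

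Finally, orthogonality of the algebraic spans extends to their closures by continuity of the inner product, giving $\overline{\cscr_T(h_1)}\perp\overline{\cscr_T(h_2)}$. The main obstacle I anticipate is bookkeeping in the symmetrization step: I must verify that the polynomiality hypotheses in conditions (i) and (ii), which are phrased only in terms of the fixed target $h_2$, actually suffice to feed the theorem when the second slot carries $T^i h_2$ instead of $h_2$. This should follow because $\|T^n(\epsilon_k T^j h_1 + T^i h_2)\|^2 = \|T^{n+i}(\epsilon_k T^{j-i}\cdots)\|^2$-type manipulations reduce a shifted expression to a polynomial in $n$ via the already-assumed polynomials (a polynomial in $n+i$ is still a polynomial in $n$), but one must check the index arithmetic carefully, particularly that the required scalar multiples $\epsilon_k$ remain in the allowed set $\ubb$ after the shift.
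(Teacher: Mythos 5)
Your first step is the right one (and is surely what the authors intend, since they state the corollary without proof): because $\gev{T}{z_1}$ is $T$-invariant, Theorem~\ref{alg-m-iso-lem}(ii) or (iii) applied to the pair $(T^jh_1,h_2)$ yields $\is{T^n(T^jh_1)}{T^nh_2}=0$ for all $n$, i.e.\ $\is{T^{a}h_1}{T^{b}h_2}=0$ for all $a\Ge b\Ge 0$. The gap is in your symmetrization step. The shift identity $T^n(\epsilon_k T^jh_1+T^ih_2)=T^{n+i}(\epsilon_k T^{j-i}h_1+h_2)$ is only available for $j\Ge i$, and applying the theorem to $(T^jh_1,T^ih_2)$ with $j\Ge i$ again produces only inner products $\is{T^{n+j}h_1}{T^{n+i}h_2}$ whose first index dominates the second. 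For $j<i$ the analogous manipulation would require knowing that $\|T^m(\epsilon_k h_1+T^{i-j}h_2)\|^2$ is a polynomial in $m$, which is \emph{not} among the hypotheses (they are phrased with powers of $T$ on $h_1$ only). So as written you never exclude $\is{T^ah_1}{T^bh_2}\neq 0$ for $a<b$; the real obstruction is the direction of the shift, not the $\epsilon_k$ bookkeeping you flag (the $\epsilon_k$ are untouched by the shift).

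The missing ingredient is that $T$ acts invertibly on $\cscr_T(h_1)$: since $h_1\in\gev{T}{z_1}$, the space $\cscr_T(h_1)$ is finite dimensional and contained in $\ker((T-z_1I)^{k_1})$ for some $k_1\in\nbb$, so $T|_{\cscr_T(h_1)}=z_1I+N_1$ with $N_1$ nilpotent and $z_1\in\tbb$, hence invertible. Consequently $\lin\{T^ah_1\colon a\Ge b\}=T^b(\cscr_T(h_1))=\cscr_T(h_1)$ for every $b\in\zbb_+$, and the relations you already have give $T^bh_2\perp\cscr_T(h_1)$ for every $b$, which is the full conclusion. (Alternatively, for fixed $b$ one has $\is{T^ah_1}{T^bh_2}=z_1^{a}r(a)$ for some polynomial $r$, so vanishing for all $a\Ge b$ forces $r=0$ and hence vanishing for all $a$.) Note finally that passing to closures is automatic here, as the cyclic subspaces generated by generalized eigenvectors are finite dimensional and therefore closed.
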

Regarding Theorem~\ref{alg-m-iso-lem} and its proof,
it is worth noticing that if $z\in \tbb$ is an
eigenvalue of an operator $T\in \lino{\mscr}$ and $h$
is in $\gev Tz$, then $T^n h$ may not be a polynomial
in $n$ (though, by Proposition~\ref{Bang1}, $\|T^n
h\|^2$ is). A simple example is given below.
   \begin{exa}
Take $\mscr=\cbb^2$, $z\in \tbb\setminus \{1\}$, $T=
[\begin{smallmatrix} z & 1
\\ 0 & z \end{smallmatrix}]$ and
$h=[\begin{smallmatrix} 1 \\ 0 \end{smallmatrix}]$.
Then $h\in \ker(T-zI)$ and $T^n h = z^n h$ for all
$n\in \zbb_+$. As a consequence, $\|T^n h\|^2=\|h\|^2$
is a polynomial in $n$, however $T^n h$ is not a
polynomial in $n$ because the sequence $\{T^n
h\}_{n=0}^{\infty}$ is not constant and nonconstant
polynomials in $n$ are unbounded.
\hfill{$\diamondsuit$}
   \end{exa}
It turns out that the assumptions of
Theorem~\ref{alg-m-iso-lem}(i) do not imply that $h_1
\perp h_2$. In fact, it can be even worse as shown in
an example below.
   \begin{exa}
Let $\mscr=\cbb^2$, $T=[\begin{smallmatrix} \I & 2
\\ 0 & -\I \end{smallmatrix}]$, $h_1=[\begin{smallmatrix} 1
\\ 0 \end{smallmatrix}]$,  $h_2=[\begin{smallmatrix} \I
\\ 1 \end{smallmatrix}]$, $z_1=\I$ and $z_2=-\I$. Then
$z_1,z_2 \in \tbb$, $z_1\neq z_2$, $z_1 = - z_2$, $h_1
\in \ker(T-z_1 I)$, $h_2 \in \ker(T-z_2 I)$ and
$T^2=-I$. It is now a matter of routine to verify that
$\|T^n(h_1+h_2)\|^2=3$ for all $n\in \zbb_+$, which
means that $\|T^n(h_1+h_2)\|^2$ is a polynomial in
$n$. Clearly $\is{T^kh_1}{T^lh_2}=-\I^{k+l+1}$ for all
$k,l\in \zbb_+$. \hfill{$\diamondsuit$}
   \end{exa}
   \section{\label{Sec6}Jordan blocks and algebraic operators}
We begin by recalling the necessary terminology and facts
related to the concept of Jordan's block. Let $\mscr$ be an
inner product space and $\hh$ be its Hilbert space
completion. Suppose that $z$ is an eigenvalue of $T\in
\lino{\mscr}$ and $h$ is in $\gev{T}{z}\setminus \{0\}$.
Then there exists $k \in \nbb$ such that
$h\in\ker((T-zI)^k)$. Using Newton's binomial formula, we
deduce that $\cscr_T(h)$ is the linear span of the vectors
$h$, $Th$, \ldots, $T^{k-1}h$, so $\cscr_T(h)$ is a finite
dimensional subspace of $\ker((T-zI)^k) \subseteq
\gev{T}{z}$. Clearly, we have
   \begin{align} \label{cyc2}
(T|_{\cscr_T(h)} - z I_{\cscr_T(h)})^{k}=(T - z
I)^{k}|_{\cscr_T(h)}=0,
   \end{align}
which implies that $T|_{\cscr_T(h)}$ is an algebraic
operator whose minimal polynomial takes the form
$(x-z)^n$ for some $n\Le k$ (Recall that an operator
$S\in \lino{\mscr}$ is said to be {\em algebraic} if
there exists a nonzero polynomial $p \in \cbb[x]$ such
that $p(S)=0$.)

Given $T\in \lino{\mscr}$, $h\in \mscr\setminus\{0\}$
and $z\in \cbb$, we say that $T$ admits a {\em Jordan
block} $J$ (or that $J$ is a {\em Jordan block} of
$T$\/) at $h$ with eigenvalue $z$ if
$J=T|_{\cscr_T(h)}$ and $J-z I_{\cscr_T(h)}$ is a
nilpotent operator. Notice that $T$ admits a Jordan
block at $h$ with eigenvalue $z$ if and only if $h\in
\gev{T}{z} \setminus \{0\}$. Moreover, if this is the
case, then by the spectral mapping theorem and
\eqref{cyc2}, $\sigma(T|_{\cscr_T(h)}) = \{z\}$ and
$z$ is an eigenvalue of $T$ (recall that $\dim
\cscr_T(h) < \infty$). This justifies that part of the
definition of a Jordan block which refers to the
expression ``with eigenvalue $z$''.

Suppose that $T$ admits a Jordan block $J$ at $h$ with
eigenvalue $z$. Set $N=J-zI_{\cscr_T(h)}$ and
$k=\nil{N}$. Then $\{0\} \varsubsetneq \jd{N}
\varsubsetneq \ldots \varsubsetneq \jd{N^k} = \mscr$.
Since $\dim \cscr_T(h) = k$, it must be that $\dim
\jd{N^j}=j$ for $j=1,\ldots,k$. Hence, there exists a
Hamel basis $\{e_j\}_{j=1}^k$ of $\cscr_T(h)$ such
that $N^j e_j=0$ for $j=1,\ldots,k$. Then the matrix
representation of $J$ with respect to this basis takes
the familiar form (with zero entries in blank spaces)
   \begin{align*}
   \left[
   \begin{matrix}
z & 1 & & &
   \\
 & z & 1 & &
   \\
& & \ddots & \ddots&
   \\
& & & z & 1
   \\
& & & & z
 \end{matrix} \right].
   \end{align*}
Moreover, by Proposition~\ref{Bang1}, $J$ is an
$m$-isometry for some $m\in \nbb$ if and only if
$|z|=1$, and if this is the case, then $J$ is a strict
$(2 \nil{N}-1)$-isometry.

Theorem~\ref{alg-m-iso-lem-b} below, which essentially
follows from corollary to Theorem~\ref{alg-m-iso-lem},
characterizes ``orthogonality'' of Jordan blocks
corresponding to distinct eigenvalues of modulus $1$.
The most spectacular is the implication
(ii)$\Rightarrow$(i).
   \begin{thm} \label{alg-m-iso-lem-b}
Suppose $T\in\lino{\mscr}$ admits a Jordan block at
$h_j\in \mscr$ with eigenvalue $z_j\in \tbb$ for
$j=1,2$ and $z_1\neq z_2$. Then the following
conditions are~equivalent{\em :}
   \begin{enumerate}
   \item[(i)] $\cscr_T(h_1) \perp \cscr_T(h_2)$,
   \item[(ii)] either $z_1=-z_2$ and there is
$(\epsilon_1,\epsilon_2) \in\ubb$ such that
$\|T^n(\epsilon_k T^j h_1 + h_2)\|^2$ is a polynomial
in $n$ for $k=1,2$ and every $j\in \zbb_+$, or
$z_1\neq -z_2$ and $\|T^n(T^jh_1 + h_2)\|^2$ is a
polynomial in $n$ for every $j\in \zbb_+$,
   \item[(iii)] $\|T^n(g_1 + h_2)\|^2$ is a
polynomial in $n$ for every $g_1\in \cscr_T(h_1)$,
   \item[(iv)] $\|T^n(g_1 + g_2)\|^2$ is a
polynomial in $n$ for all $g_1\in \cscr_T(h_1)$ and
$g_2\in \cscr_T(h_2)$,
   \item[(v)] there is $m\in \nbb$ such that $T|_{\cscr_T(h_1) +
\cscr_T(h_2)}$ is an $m$-isometry.
   \end{enumerate}
   \end{thm}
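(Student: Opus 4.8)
The plan is to prove the equivalence by establishing the cyclic chain of implications
\[
\mathrm{(i)} \Rightarrow \mathrm{(v)} \Rightarrow \mathrm{(iv)} \Rightarrow \mathrm{(iii)} \Rightarrow \mathrm{(ii)} \Rightarrow \mathrm{(i)}.
\]
The structural fact I would record first is that each cyclic space $\cscr_T(h_j)$ is finite dimensional and $T$-invariant, so the restriction $J_j := T|_{\cscr_T(h_j)}$ is the Jordan block $z_j I + N_j$ with $N_j$ nilpotent, and $\cscr_T(h_1)+\cscr_T(h_2)$ is again a finite dimensional, $T$-invariant, hence complete, subspace. Because $z_j \in \tbb$, Proposition~\ref{Bang1} tells us that $J_j$ is an $m_j$-isometry with $m_j = 2\nil{N_j}-1$; in particular $\|T^n g_j\|^2$ is a polynomial in $n$ for every $g_j \in \cscr_T(h_j)$, by Theorem~\ref{Newt4}.

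For $\mathrm{(i)} \Rightarrow \mathrm{(v)}$, if $\cscr_T(h_1)\perp\cscr_T(h_2)$ then each $g$ in $\cscr_T(h_1)+\cscr_T(h_2)$ decomposes uniquely as $g=g_1+g_2$ with $g_j\in\cscr_T(h_j)$, and since $T^n g_j$ stays in $\cscr_T(h_j)$ the cross term vanishes and $\|T^n g\|^2 = \|T^n g_1\|^2 + \|T^n g_2\|^2$. This is a sum of two polynomials in $n$, hence a polynomial of degree at most $\max\{m_1,m_2\}-1$, so Theorem~\ref{Newt4} shows that $T|_{\cscr_T(h_1)+\cscr_T(h_2)}$ is an $m$-isometry for $m=\max\{m_1,m_2\}$. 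The next two implications are immediate: $\mathrm{(v)} \Rightarrow \mathrm{(iv)}$ since $g_1+g_2$ lies in $\cscr_T(h_1)+\cscr_T(h_2)$, and $\mathrm{(iv)} \Rightarrow \mathrm{(iii)}$ by taking $g_2 = h_2$.

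For $\mathrm{(iii)} \Rightarrow \mathrm{(ii)}$ I would simply insert, into the hypothesis of (iii), the particular vectors $g_1$ occurring in (ii). When $z_1\neq -z_2$, the choice $g_1 = T^j h_1 \in \cscr_T(h_1)$ shows that $\|T^n(T^j h_1 + h_2)\|^2$ is a polynomial for every $j$, which is the second alternative of (ii). When $z_1 = -z_2$, the vectors $\epsilon_k T^j h_1$ likewise belong to $\cscr_T(h_1)$ for any $(\epsilon_1,\epsilon_2)\in\ubb$, so (iii) supplies the required polynomials and the first alternative of (ii) holds.

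The decisive implication is $\mathrm{(ii)} \Rightarrow \mathrm{(i)}$, and this is where all the genuine difficulty lies, since it must convert a merely countable family of polynomial hypotheses into full orthogonality. The point, however, is that condition (ii) is verbatim the hypothesis of Corollary~\ref{Bang-ch}, the case $z_1=-z_2$ matching its part (i) and the case $z_1\neq -z_2$ matching its part (ii). Invoking that corollary yields $\overline{\cscr_T(h_1)} \perp \overline{\cscr_T(h_2)}$, and since the cyclic spaces are finite dimensional and therefore closed, this is exactly $\cscr_T(h_1)\perp\cscr_T(h_2)$. Thus the hard analytic work is delegated to Corollary~\ref{Bang-ch} and, beneath it, to Theorem~\ref{alg-m-iso-lem} and Carlson's theorem; with that cited, the chain closes and (i)--(v) are equivalent.
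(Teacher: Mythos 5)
Your proposal is correct and follows essentially the same route as the paper: the same cycle (i)$\Rightarrow$(v)$\Rightarrow$(iv)$\Rightarrow$(iii)$\Rightarrow$(ii)$\Rightarrow$(i), with (ii)$\Rightarrow$(i) delegated to Corollary~\ref{Bang-ch} and (i)$\Rightarrow$(v) obtained from the orthogonal splitting $\|T^n(g_1+g_2)\|^2=\|T^ng_1\|^2+\|T^ng_2\|^2$ together with Proposition~\ref{Bang1}. The only cosmetic difference is that the paper verifies the defining identity \eqref{def-miso} directly where you invoke the polynomial characterization, and it cites Proposition~\ref{polar-1} for (v)$\Rightarrow$(iv); both are equivalent bookkeeping.
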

   \begin{proof}
The implication (v)$\Rightarrow$(iv) follows from
Proposition~\ref{polar-1}. The implications
(iv)$\Rightarrow$(iii) and (iii)$\Rightarrow$(ii) are
obvious.

(ii)$\Rightarrow$(i) Apply Corollary~\ref{Bang-ch}.

(i)$\Rightarrow$(v) By assumption and
Proposition~\ref{Bang1}, $T|_{\cscr_T(h_j)}$ is
$m_j$-isometry, where $m_j=2 \nil{N_j}-1$ with
$N_j:=T|_{\cscr_T(h_j)}-z_j I_{\cscr_T(h_j)}$ for $j=1,2$.
Hence $T|_{\cscr_T(h_j)}$is an $m$-isometry with
$m=\max\{m_1,m_2\}$ for $j=1,2$. Using \eqref{def-miso},
(i) and the fact that the vector spaces $\cscr_T(h_1)$,
$\cscr_T(h_2)$ and $\cscr_T(h_1) + \cscr_T(h_2)$ are
invariant for $T$, we easily verify that the operator
$T|_{\cscr_T(h_1) + \cscr_T(h_2)}$ is an $m$-isometry. This
completes the proof.
   \end{proof}
   \begin{rem}
It is worth mentioning that some implications of
Theorem~\ref{alg-m-iso-lem-b} can also be proved in a
different way. Namely, the implication (iv)$\Rightarrow$(v)
is a direct consequence of Theorem~\ref{weakmiso}. In turn,
the implication (v)$\Rightarrow$(i) can be proved as
follows. Under the assumptions of
Theorem~\ref{alg-m-iso-lem-b}, suppose that
$T_0:=T|_{\mscr_0}$ is an $m$-isometry, where
$\mscr_0:=\cscr_T(h_1) + \cscr_T(h_2)$. Recall that
$\mscr_0$ is a finite dimensional vector space which is
invariant for $T$. Moreover, there exist $k_1,k_2 \in \nbb$
such that
   \begin{align} \label{rodo1}
\cscr_T(h_j)\subseteq \mscr_0 \cap \ker{(T - z_j
I)^{k_j}} = \ker{(T_0 - z_j I_{\mscr_0})^{k_j}}, \quad
j=1,2.
   \end{align}
Noticing that
   \begin{align*}
(T_0 - z_1 I_{\mscr_0})^{k_1}(T_0 - z_2
I_{\mscr_0})^{k_2}(g_1+g_2) \overset{\eqref{rodo1}}=
0, \quad g_1 \in \cscr_T(h_1), \, g_2 \in
\cscr_T(h_2),
   \end{align*}
we see that $T_0$ is an algebraic operator (observe that
$\cscr_T(h_1) \cap \cscr_T(h_2)=\{0\}$ due to \eqref{rodo1}
and \cite[Theorem~3.5.2]{Bi-So87}). Since $\mscr_0$ is
finite dimensional, $\mscr_0$ is a Hilbert space and
$T_0\in \ogr{\mscr_0}$. Define the polynomial $p\in
\cbb[x,y]$ by $p(x,y)=(xy-1)^{m}$. Then one can verify that
   \begin{align*}
p(T_0)=(-1)^m\bscr_m(T_0)=0,
   \end{align*}
where the operator $p(T_0)$ is defined as in
\cite[(1)]{A-H-S}. Hence $T_0$ is a root of $p$ in the
sense of \cite[Page 126]{A-H-S}. Note now that if
$\lambda,\mu\in \sigma(T_0)$ and $\lambda\neq \mu$,
then $p(\lambda,\bar \mu)\neq 0$. Indeed, otherwise
$p(\lambda,\bar \mu)= 0$, which implies that
$\lambda\bar \mu=1$. Since by \eqref{as-spec} and
$\dim \mscr_0 < \infty$, $\sigma(T_0) \subseteq \tbb$,
we deduce that $\lambda = \mu$, a contradiction.
Applying \cite[Lemma~19]{A-H-S}, we get
$\gev{T_0}{z_1} \perp \gev{T_0}{z_2}$. Since by
\eqref{rodo1}, $\cscr_T(h_j) \subseteq \gev{T_0}{z_j}$
for $j=1,2$, we conclude that $\cscr_T(h_1) \perp
\cscr_T(h_2)$, which gives (i).
   \hfill{$\diamondsuit$}
   \end{rem}
   Using Theorem~\ref{alg-m-iso-lem-b}, we can
completely characterize algebraic $m$-isometries on
inner product spaces. Note that the equivalence
(i)$\Leftrightarrow$(iv) of
Proposition~\ref{m-iso-alg} below was proved by
Berm\'{u}dez, Martin\'{o}n and Noda in the finite
dimensional case (see
\cite[Theorem~2.7]{Ber-Mar-No13}).
   \begin{pro} \label{m-iso-alg}
Suppose $\mscr\neq \{0\}$ and $T\in \lino{\mscr}$.
Then the following conditions are equivalent{\em :}
   \begin{enumerate}
   \item[(i)] $T$ is an $m$-isometric algebraic operator
for some $m\in\nbb$,
   \item[(ii)] $T$ is algebraic and $\|T^n h\|^2$ is a polynomial in $n$ for
every $h\in \mscr$,
   \item[(iii)] $T$ has a nontrivial orthogonal decomposition\/\footnote{\;The
orthogonal decomposition in \eqref{sta-orth-1}
corresponds to the orthogonal decomposition
$\mscr=\mscr_1\oplus \ldots \oplus \mscr_{\varkappa}$,
while nontriviality means that $\mscr_j\neq \{0\}$ for
all $j=1, \ldots, \varkappa$.}
   \begin{align} \label{sta-orth-1}
T=(z_1 I_{\mscr_1} + N_1) \oplus \ldots \oplus
(z_\varkappa I_{\mscr_\varkappa} + N_\varkappa) \qquad
(\varkappa\in \nbb),
   \end{align}
where $z_1, \ldots, z_\varkappa$ are distinct elements
of $\tbb$ and $N_1 \in \lino{\mscr_1}, \ldots,
N_\varkappa \in \lino{\mscr_\varkappa}$ are nilpotent
operators,
   \item[(iv)] $T$ is algebraic and there exist an
isometric $($equivalently, a unitary$)$ operator $V\in
\lino{\mscr}$ and a nilpotent operator $N\in
\lino{\mscr}$ such that $VN=NV$ and $T=V+N$.
   \end{enumerate}
Moreover, if {\em (iii)} holds, then $T$ is a strict
$m$-isometry with
   \begin{align} \label{maxnj}
m = \max_{j\in \{1, \ldots, \varkappa\}} \big(2
\nil{N_j}-1\big).
   \end{align}
   \end{pro}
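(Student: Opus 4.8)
The plan is to prove the cyclic chain of implications (i)$\Rightarrow$(ii)$\Rightarrow$(iii)$\Rightarrow$(iv)$\Rightarrow$(i) and then to read off the ``moreover'' part directly from the decomposition in (iii). The implication (i)$\Rightarrow$(ii) is immediate: if $T$ is an $m$-isometry, then, as noted at the beginning of Section~\ref{Sec4} (the inner product space analogue of Theorem~\ref{Newt4}(iii)), $\|T^n h\|^2$ is a polynomial in $n$ for every $h\in \mscr$, and algebraicity is part of the hypothesis. For (iii)$\Rightarrow$(iv) I would set $V=\bigoplus_{j} z_j I_{\mscr_j}$ and $N=\bigoplus_j N_j$: since the $\mscr_j$ are orthogonal and $|z_j|=1$, the operator $V$ is a unitary (in particular isometric, which also accounts for the parenthetical in (iv)), $N$ is nilpotent, $V$ and $N$ commute blockwise, $T=V+N$, and $T$ is algebraic because it is a finite orthogonal sum of the algebraic operators $z_j I_{\mscr_j}+N_j$. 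For (iv)$\Rightarrow$(i) I would apply Theorem~\ref{Bang1-m}(i) with the $1$-isometry $A=V$ and the commuting nilpotent $N$, obtaining that $T=V+N$ is an $m_N$-isometry with $m_N=1+2(\nil{N}-1)=2\nil{N}-1$; algebraicity is again assumed, so (i) holds.

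The crux is the implication (ii)$\Rightarrow$(iii). Starting from an algebraic $T$ whose minimal polynomial factors as $\prod_{j=1}^{\varkappa}(x-z_j)^{k_j}$ with distinct $z_j$, the primary decomposition theorem furnishes the algebraic direct sum $\mscr=\bigoplus_{j=1}^\varkappa \mscr_j$, where $\mscr_j:=\ker((T-z_jI)^{k_j})$ coincides with the generalized eigenspace $\gev{T}{z_j}\neq \{0\}$ (the kernels stabilize for an algebraic operator) and on which $T$ acts as $z_j I_{\mscr_j}+N_j$ with $N_j:=(T-z_jI)|_{\mscr_j}$ nilpotent. Two points remain. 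First, restricting the polynomiality hypothesis of (ii) to vectors of $\mscr_j$ and invoking Proposition~\ref{Bang1}(ii) forces $|z_j|=1$, so each $z_j\in \tbb$. Second, and this is the main obstacle, one must upgrade the algebraic direct sum to an orthogonal one, that is, show $\mscr_i\perp \mscr_j$ for $i\neq j$. Here I would feed arbitrary $h_i\in \mscr_i=\gev{T}{z_i}$ and $h_j\in \mscr_j=\gev{T}{z_j}$ into Theorem~\ref{alg-m-iso-lem}: hypothesis (ii) guarantees that $\|T^n(\epsilon h_i+h_j)\|^2$ is a polynomial in $n$ for every scalar $\epsilon$, so the case $z_i=-z_j$ is covered by Theorem~\ref{alg-m-iso-lem}(ii) and the case $z_i\neq -z_j$ by Theorem~\ref{alg-m-iso-lem}(iii); in either case $\is{T^n h_i}{T^n h_j}=0$, and evaluating at $n=0$ gives $\is{h_i}{h_j}=0$. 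This is precisely the step that rests, through Theorem~\ref{alg-m-iso-lem} and Lemma~\ref{re-polyn}, on Carlson's theorem. Since the orthogonal pieces span $\mscr$, this yields the nontrivial orthogonal decomposition \eqref{sta-orth-1}.

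For the ``moreover'' part I would assume (iii). By Proposition~\ref{Bang1}(i), each summand $z_j I_{\mscr_j}+N_j$ (an isometry $z_jI_{\mscr_j}$ plus the commuting nilpotent $N_j$) is a strict $(2\nil{N_j}-1)$-isometry. For an orthogonal direct sum one has $\|T^n h\|^2=\sum_j \|(z_jI_{\mscr_j}+N_j)^n h_j\|^2$ for $h=\sum_j h_j$, so $\|T^nh\|^2$ is a polynomial in $n$ of degree at most $\max_j(2\nil{N_j}-1)-1=m-1$; hence $T$ is an $m$-isometry with $m$ as in \eqref{maxnj}. To see strictness, pick an index $j_0$ attaining the maximum and a vector $h_{j_0}\in \mscr_{j_0}$ for which $\|(z_{j_0}I_{\mscr_{j_0}}+N_{j_0})^n h_{j_0}\|^2$ has degree exactly $2\nil{N_{j_0}}-1=m-1$ (such a vector exists by strictness of that block); viewing $h_{j_0}$ as an element of $\mscr$ shows $\|T^n h_{j_0}\|^2$ has degree $m-1$, so $T$ is not an $(m-1)$-isometry. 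I expect the only genuinely delicate input to be the orthogonality step in (ii)$\Rightarrow$(iii); all the remaining arguments are structural manipulations of direct sums combined with the already established Propositions~\ref{Bang1} and Theorem~\ref{Bang1-m}.
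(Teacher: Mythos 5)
Your proposal is correct and follows essentially the same route as the paper: (i)$\Rightarrow$(ii) via Proposition~\ref{polar-1}, (ii)$\Rightarrow$(iii) via the primary decomposition plus Proposition~\ref{Bang1}(ii) for $|z_j|=1$ and the Carlson-based orthogonality result for $\mscr_i\perp\mscr_j$, and the nilpotent-perturbation results for the remaining implications and the strictness claim. The only cosmetic difference is that you invoke Theorem~\ref{alg-m-iso-lem} directly where the paper routes the orthogonality step through the implication (iv)$\Rightarrow$(i) of Theorem~\ref{alg-m-iso-lem-b} (itself a repackaging of the same lemma), and you use Theorem~\ref{Bang1-m}(i) with $A=V$ where the paper cites Proposition~\ref{Bang1}(i).
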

   \begin{proof}
(i)$\Rightarrow$(ii) This implication is a direct
consequence of Proposition~\ref{polar-1}.

(ii)$\Rightarrow$(iii) Since $T$ is algebraic, there
exist $\varkappa\in \nbb$, $k_1, \ldots, k_\varkappa
\in \nbb$, distinct numbers $z_1, \ldots, z_\varkappa
\in \cbb$ and nonzero vector subspaces $\mscr_1,
\ldots, \mscr_\varkappa$ of $\mscr$ such that $\mscr=
\mscr_1 \dotplus \ldots \dotplus \mscr_\varkappa$ (a
direct sum), $T(\mscr_j) \subseteq \mscr_j$ and $(T
-z_j I)^{k_j}|_{\mscr_j} = 0$ for $j=1, \ldots,
\varkappa$ (see e.g., \cite[Section~6]{C-J-S09}).
Then, by Proposition~\ref{Bang1}(ii), $z_j\in \tbb$
for $j=1, \ldots, \varkappa$. Applying the implication
(iv)$\Rightarrow$(i) of Theorem~\ref{alg-m-iso-lem-b}
shows that $\mscr_i \perp \mscr_j$ for all $i\neq j$,
which yields (iii).

(iii)$\Rightarrow$(i) Set $k_j=\nil{N_j}$ for
$j=1,\ldots,\varkappa$. Since $p(T)=0$ for $p\in \cbb[x]$
given by
   \begin{align*}
p=(x-z_1)^{k_1} \cdots (x-z_\varkappa)^{k_\varkappa},
   \end{align*}
the operator $T$ is algebraic. In turn, by
Proposition~\ref{Bang1}(i), $T_j:=z_j I_{\mscr_j} +
N_j$ is an $m$-isometry for $j=1, \ldots, \varkappa$,
where $m$ is as in \eqref{maxnj}. Hence by
\eqref{def-miso} and \eqref{sta-orth-1}, $T$ is an
$m$-isometry. We now show that $T$ is a strict
$m$-isometry. Let $j_0\in \{1, \ldots,\varkappa\}$ be
such that $m=2k_{j_0} -1$. Then by \eqref{def-miso}
and Proposition~\ref{Bang1}(i) applied to $T_{j_0}$,
there exists $h_{j_0}\in \mscr_{j_0}$ such that
$\hat\fcal_{T_{j_0};m-1}(h_{j_0}) \neq 0$ which
together with \eqref{sta-orth-1} implies that
$\hat\fcal_{T;m-1}(g_{0}) \neq 0$, where $g_0:=g_{0,1}
\oplus \ldots \oplus g_{0,\varkappa}$ with $g_{0,j}=0$
for all $j \neq j_0$ and $g_{0,j_0}=h_{j_0}$. This
shows that the operator $T$ is not an
$(m-1)$-isometry.

(iii)$\Rightarrow$(iv) Clearly $V:=z_1 I_{\mscr_1}
\oplus \ldots \oplus z_\varkappa I_{\mscr_\varkappa}$
and $N:= N_1 \oplus \ldots \oplus N_\varkappa$
satisfy~(iv).

(iv)$\Rightarrow$(i) It suffices to apply
Proposition~\ref{Bang1}(i).
   \end{proof}
   \begin{rem} \label{inv-miso}
In view of Proposition~\ref{m-iso-alg}, if $T\in
\ogr{\hh}$ is an algebraic strict $m$-isometry, then
$m$ is a positive odd number\footnote{\;The oddness of
$m$ can also be deduced from
\cite[Proposition~1.23]{Ag-St1} and the inclusion
$\sigma(T) \subseteq \tbb$ which follows from
Proposition~\ref{m-iso-alg}.}. In particular, this is
the case for strict $m$-isometries on finite
dimensional Hilbert spaces (use the Cayley-Hamilton
theorem). According to Proposition~\ref{m-iso-alg},
for any $\epsilon \in (0,\infty)$ and for any positive
odd number $m$, there exists a strict $m$-isometry on
a finite dimensional Hilbert space (of dimension at
least $\frac{1}{2}(m+1)$) such that $\|T\| \Le 1 +
\epsilon$.
   \hfill{$\diamondsuit$}
   \end{rem}
   \begin{rem} \label{wiel-nod}
The implication (ii)$\Rightarrow$(i) of Proposition~
\ref{m-iso-alg} is not true if the assumption on
algebraicity is dropped. It suffices to consider the
countable orthogonal sum $\bigoplus_{n=1}^{\infty}
T_n$ on ``finite vectors'', where each $T_n$ is a
strict $m_n$-isometry with $m_n \to \infty$ as $n\to
\infty$. If $\{m_n\}_{n=1}^{\infty}$ are positive odd
numbers, then in view of Remark~\ref{inv-miso} we can
also guarantee that $\sup_{n\in \nbb} \|T_n\| <
\infty$, or equivalently that $T$ is continuous.
   \hfill{$\diamondsuit$}
   \end{rem}
   As shown below the only compact $m$-isometries are
algebraic operators on finite dimensional Hilbert
spaces, and as such are described by
Proposition~\ref{m-iso-alg}.
   \begin{pro} \label{compop}
Let $T\in \ogr{\hh}$ be an $m$-isometry. Suppose $T^l$ is
compact for some $l\in \nbb$. Then $\dim \hh < \infty$ and
$T$ is an algebraic $m$-isometry.
   \end{pro}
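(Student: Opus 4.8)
The plan is to combine the spectral dichotomy \eqref{as-spec} for $m$-isometries with the elementary spectral theory of compact operators. We may assume $\hh\neq\{0\}$, since otherwise there is nothing to prove. First I would invoke \eqref{as-spec}: because $T$ is an $m$-isometry on a nonzero Hilbert space, either $\sigma(T)\subseteq\tbb$ or $\sigma(T)=\overline{\dbb}$. The crux of the argument is that the second alternative is incompatible with the compactness of some power of $T$.

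To establish this, I would apply the spectral mapping theorem to write $\sigma(T^l)=\{z^l\colon z\in\sigma(T)\}$. If $\sigma(T)=\overline{\dbb}$, then, since the map $z\mapsto z^l$ carries $\overline{\dbb}$ onto $\overline{\dbb}$ (for $w=r\E^{\I\theta}$ with $r\Le 1$ one has $w=z^l$ with $z=r^{1/l}\E^{\I\theta/l}\in\overline{\dbb}$), we would obtain $\sigma(T^l)=\overline{\dbb}$, which is uncountable. This contradicts the fact that the spectrum of a compact operator is at most countable (its nonzero part consists of eigenvalues of finite multiplicity with $0$ as the only possible accumulation point). Hence $\sigma(T)=\overline{\dbb}$ is impossible, and we are left with $\sigma(T)\subseteq\tbb$.

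Since $\sigma(T)\subseteq\tbb$, we have $0\notin\sigma(T)$, and thus $0\notin\sigma(T^l)$ again by the spectral mapping theorem; that is, $T^l$ is invertible. A compact operator can be invertible only on a finite dimensional space, for otherwise $I=T^l(T^l)^{-1}$ would be compact, which is impossible when $\dim\hh=\infty$. Therefore $\dim\hh<\infty$. Once finite dimensionality is secured, $T$ is algebraic by the Cayley--Hamilton theorem (its characteristic polynomial annihilates it), and $T$ is an $m$-isometry by hypothesis, so $T$ is an algebraic $m$-isometry, as claimed.

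The main obstacle is the step ruling out $\sigma(T)=\overline{\dbb}$: everything hinges on coupling the dichotomy \eqref{as-spec} with the countability of the spectrum of a compact operator via the spectral mapping theorem. Once the invertibility of $T^l$ is in hand, both finite dimensionality and algebraicity follow routinely, and no further appeal to the structure of $m$-isometries is needed.
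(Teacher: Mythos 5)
Your proof is correct, and it follows the same overall skeleton as the paper's: use the dichotomy \eqref{as-spec} together with the spectral theory of compact operators to force $\sigma(T)\subseteq\tbb$, deduce invertibility, conclude $\dim\hh<\infty$ from the incompatibility of compactness with invertibility in infinite dimensions, and finish with Cayley--Hamilton. The one genuine difference is in how you handle the hypothesis that only the power $T^l$ is compact. The paper first reduces to the case $l=1$ by citing the nontrivial external fact that $T^l$ is again an $m$-isometry (\cite[Theorem~2.3]{Ja}), and then applies the Riesz--Schauder theorem directly to $T$, ruling out $\sigma(T)=\overline{\dbb}$ because that set has accumulation points other than $0$. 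You instead keep $T^l$ throughout and transfer the spectral information via the spectral mapping theorem: if $\sigma(T)=\overline{\dbb}$ then $\sigma(T^l)=\overline{\dbb}$ is uncountable, contradicting the countability of the spectrum of the compact operator $T^l$; and $\sigma(T)\subseteq\tbb$ gives $0\notin\sigma(T^l)$, so $T^l$ itself is invertible and compact. Your route is slightly more self-contained, since it needs nothing about $m$-isometries beyond \eqref{as-spec} and in particular avoids the cited result on powers of $m$-isometries; the paper's reduction buys a marginally cleaner statement ("$T$ is a compact $m$-isometry") at the cost of that citation. Both arguments are complete and correct.
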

   \begin{proof}
Clearly, we can assume that $\hh\neq \{0\}$. Since, by
\cite[Theorem~2.3]{Ja}, $T^l$ is an $m$-isometry, there is
no loss of generality in assuming that $T$ is a compact
$m$-isometry. In view of the Riesz-Schauder theorem
\cite[Theorem~VI.15]{R-S}, the only possible accumulation
point for the spectrum of a compact operator is $0$.
Therefore by \eqref{as-spec}, $\sigma(T)\subseteq \tbb$.
This implies that $T$ is invertible in $\ogr{\hh}$. Since
$T$ is compact, we deduce that $\dim \hh < \infty$. By the
Cayley-Hamilton theorem, $T$ is algebraic. This completes
the proof.
   \end{proof}
   \begin{rem}
By Proposition~\ref{compop}, if $T\in \ogr{\hh}$ is an
$m$-isometry such that $T^l$ is compact for some $l\in
\nbb$, then $T$ is compact. This is not true for
operators which are not $m$-isometries. Indeed, if
$\dim{\hh} \Ge \aleph_0$, then for any integer $l\Ge
2$ there exists an operator $T\in \ogr{\hh}$ such that
$T^l$ is compact, though the operators $T^1, \ldots,
T^{l-1}$ are not, e.g., the nilpotent operator $T\in
\ogr{\hh}$ defined by
   \begin{align*}
T(h_1 \oplus \ldots \oplus h_l) = 0 \oplus h_1 \oplus
\ldots \oplus h_{l-1}, \quad h_1, \ldots, h_l \in
\mathcal M,
   \end{align*}
on the orthogonal sum $\hh = \mathcal M \oplus \ldots
\oplus \mathcal M$ of $l$ copies of an infinite
dimensional Hilbert space $\mathcal M$.
   \hfill{$\diamondsuit$}
   \end{rem}
   \textbf{Acknowledgement}. A substantial part of this
paper was written while the first and the third author
visited Kyungpook National University during the autumn of
2018 and the spring of 2019. They wish to thank the faculty
and the administration of this unit for their warm
hospitality.

   \bibliographystyle{amsalpha}
   
   \end{document}